\documentclass{amsart}
\usepackage{amssymb}
\usepackage{mathtools}
\usepackage{microtype}
\usepackage[svgnames]{xcolor}
\usepackage[unicode,
  colorlinks=true,
  linktocpage=true,
  citecolor=OliveDrab,
  linkcolor=DarkMagenta,
  pdftitle={On partial groups of small order},
  pdfauthor={Philip Hackney}
  ]{hyperref}
\usepackage{tikz-cd}
\usepackage{comment}

\usetikzlibrary{calc}
\usetikzlibrary{shapes.geometric}

\usepackage{booktabs}
\usepackage[capitalise,noabbrev]{cleveref}

\usepackage{enumitem}

\newcommand{\fin}{\Upsilon}

\DeclareMathOperator{\id}{id}
\newcommand{\op}{\textup{op}}
\newcommand{\set}{\mathsf{Set}}

\newcommand{\sym}{\mathsf{Sym}}
\DeclareMathOperator{\sk}{sk}
\DeclareMathOperator{\cosk}{cosk}
\DeclareMathOperator{\aut}{Aut}
\newcommand{\lr}[1]{\langle #1 \rangle}

\DeclareMathOperator{\mat}{Mat}

\usepackage{mathrsfs}
\newcommand{\edgemap}{\mathscr{E}}
\newcommand{\bous}{\mathscr{B}}
\newcommand{\bswords}{\mathbf{S}}

\usepackage[T1]{fontenc}

\newtheorem{theorem}{Theorem}[section]
\newtheorem{proposition}[theorem]{Proposition}
\newtheorem{corollary}[theorem]{Corollary}
\newtheorem{lemma}[theorem]{Lemma}

\theoremstyle{definition}
\newtheorem{definition}[theorem]{Definition}
\newtheorem{example}[theorem]{Example}

\theoremstyle{remark}
\newtheorem{remark}[theorem]{Remark}

\AddToHook{env/proposition/begin}{\crefalias{theorem}{proposition}}
\AddToHook{env/corollary/begin}{\crefalias{theorem}{corollary}}
\AddToHook{env/lemma/begin}{\crefalias{theorem}{lemma}}
\AddToHook{env/definition/begin}{\crefalias{theorem}{definition}}
\AddToHook{env/example/begin}{\crefalias{theorem}{example}}
\AddToHook{env/remark/begin}{\crefalias{theorem}{remark}}
\apptocmd{\appendix}{\crefalias{section}{appendix}}{}{}

\subjclass[2020]{Primary: 
18N50, 
20N99. 
Secondary: 
05A15, 
20D20, 
55U10} 

\begin{document}

\title{On partial groups of small order}

\author{Philip Hackney}
\address{Department of Mathematics, University of Louisiana at Lafayette}
\email{philip@phck.net} 
\urladdr{http://phck.net}

\thanks{
This work was supported by a grant from the Simons Foundation (\#850849, PH). 
The author was partially supported by the Louisiana Board of Regents through the Board of Regents Support fund LEQSF(2024-27)-RD-A-31.
}

\begin{abstract}
We explain a computer enumeration of all partial groups (in the sense of Chermak) of order at most 10.
An accompanying dataset contains a full list, consisting of 123,650 partial groups of order at most 9 and 178,937,003 partial groups of order 10; the paper itself contains a complete list of indecomposable partial groups of order at most 5. 
Inspection of the data led us to conjecture and then prove two results: that indecomposable partial groups of dimension two less than their order are precisely skeleta of groups of that order, and partial groups of (higher Segal) degree at most 2 are 2-coskeletal.
\end{abstract}
  
\date{July 7, 2026}

\maketitle

\section{Introduction}

Partial groups, and the well-behaved subclass of \emph{localities}, were introduced by Chermak in \cite{Chermak:FSL} as a part of his proof of the existence and uniqueness of centric linking systems, a problem posed in \cite{BrotoLeviOliver:HTFS}.
Localities are an encapsulation of the idea of doing $p$-local analysis in finite group theory \emph{without} the presence of an ambient finite group.
The other main implementation of this idea is via the fusion systems of Puig \cite{Puig:FC,AschbacherOliver:FS}, which are closely related to localities (see e.g.\ \cite{ChermakHenke:FSLAD,Henke:CPNSRL}).

Our focus in this paper is not on localities, but on general partial groups, which have excellent categorical properties \cite{Chermak:FL,Salati:LCGRPG,HackneyLynd:PGSSS}.
We anticipate a number of `locality-like' structured partial groups outside the bounds of $p$-local finite group theory, with the first examples being the punctured Weyl groups from \cite{HackneyLynd:HSSPG}.
Partial groups also have a nice extension theory \cite{BrotoGonzalez:ETPG}, may be considered as generalizations of the pregroups of Stallings \cite{LemoineMolinier:PGPRFS,Stallings:GT3DM}, and are flexible enough that each group is the group of automorphisms of some partial group \cite{DiazRamosMolinierViruel:PPG}.

Our core contribution is a complete enumeration of all partial groups of order at most 10 (see \cref{table partial group counts} for orders up to 9, and \cref{sec results}).
Two prior results make such an enumeration feasible. 
First, a finite partial group is indeed a finite structure \cite{HackneyMolinier:DPG} (it has only finitely many `nondegenerate' simplices), even though it is not part of the definition.
Second, every partial group sits inside a canonical maximal partial group determined by its underlying binary partial group \cite{HackneyLyndSalati:PGPG}, which gives the search a known ceiling. 
The enumeration is coupled with a collection of routines for working with finite partial groups \cite{partialgroups_jl}, written in the Julia programming language \cite{Julia-2017}.
Our aim is to enable experimentation: studying properties of individual partial groups, constructing examples exhibiting desired behaviors, or testing conjectures across a catalog.

Two results in particular were conjectured based on the enumerated data and then proved: indecomposable partial groups whose dimension is two less than their order are precisely the $(n{-}1)$-skeleta of groups (\cref{dim one less size}), and a partial group of degree at most 2 is automatically 2-coskeletal (\cref{thm degree 2}).
We hope our enumeration will allow others to similarly test hypotheses about partial groups against a substantial corpus.

\begin{table}
\centering
\caption{Nontrivial partial groups by order, free, dimension}
\label{table partial group counts}
\begin{tabular}{l c r r r r r r r r r}
\toprule
& \multicolumn{8}{c}{dimension} & \\
\cmidrule(lr){3-10}
order & f & 1 & 2 & 3 & 4 & 5 & 6 & 7 & 8 & total \\
\midrule
2 & 0 & 1 &   &   &   &   &   &   &   & 1\\
\addlinespace[4pt]
3 & 0 & 1 &   &   &   &   &   &   &   & 1\\
  & 1 & 1 & 1 &   &   &   &   &   &   & 2\\
\addlinespace[4pt]
4 & 0 & 1 & 1 & 1 &   &   &   &   &   & 3\\
  & 1 & 1 & 2 & 1 &   &   &   &   &   & 4\\
\addlinespace[4pt]
5 & 0 & 1 & 1 & 1 &   &   &   &   &   & 3\\
  & 1 & 1 & 4 & 1 &   &   &   &   &   & 6\\
  & 2 & 1 & 5 & 1 & 1 &   &   &   &   & 8\\
\addlinespace[4pt]
6 & 0 & 1 & 2 & 3 &   &   &   &   &   & 6\\
  & 1 & 1 & 12 & 19 & 1 & 1 &   &   &   & 34\\
  & 2 & 1 & 19 & 19 & 2 & 1 &   &   &   & 42\\
\addlinespace[4pt]
7 & 0 & 1 & 5 & 17 &   &   &   &   &   & 23\\
  & 1 & 1 & 27 & 72 & 1 & 1 &   &   &   & 102\\
  & 2 & 1 & 78 & 92 & 3 & 1 &   &   &   & 175\\
  & 3 & 1 & 60 & 39 & 7 & 1 & 1 &   &   & 109\\
\addlinespace[4pt]
8 & 0 & 1 & 13 & 396 & 116 & 37 & 1 & 1 &   & 565\\
  & 1 & 1 & 95 & 2328 & 586 & 113 & 1 & 1 &   & 3125\\
  & 2 & 1 & 437 & 2390 & 440 & 90 & 1 & 1 &   & 3360\\
  & 3 & 1 & 638 & 1505 & 419 & 60 & 3 & 2 &   & 2628\\
\addlinespace[4pt]
9 & 0 & 1 & 31 & 1169 & 116 & 37 & 1 & 1 &   & 1356\\
  & 1 & 1 & 446 & 12891 & 697 & 149 & 1 & 1 &   & 14186\\
  & 2 & 1 & 3610 & 24021 & 1092 & 192 & 1 & 1 &   & 28918\\
  & 3 & 1 & 10477 & 22102 & 1424 & 148 & 4 & 2 &   & 34158\\
  & 4 & 1 & 4581 & 11875 & 12937 & 5389 & 47 & 2 & 2 & 34834\\
\midrule
all  & &  24 & 20545 & 78943 & 17842 & 6220 & 61 & 12 & 2 & 123649 \\
\bottomrule
\end{tabular}
\end{table}

\subsection{Guide to the paper}
The next section includes basic information about partial groups that will be needed in the remainder of the paper, with special attention paid to symmetric sets of matrices.
\Cref{sec:pg via gen matrix} introduces the idea of a partial group freely generated by a matrix (or set of matrices), including in particular the case of a binary partial group generated by a single multiplication.
These ideas are used in \cref{sec algorithm}, which describes in detail our algorithm for enumerating partial groups.
We then turn to a summary of counts of partial groups in \cref{sec results}, which can be inferred from the complete dataset \cite{partial_groups_data}.

The last section of the main body of the paper is more specialized and concerns the relationship between higher Segal spaces and partial groups \cite{HackneyLynd:HSSPG,Dyckerhoff:CPOC}. 
We provide an algorithm for computing the degree of a partial group, and connect degree to coskeletality.

The first appendix consists of an explicit catalog of (indecomposable) partial groups of order at most 5. 
\Cref{sec appendix dim and order} explains the kinds of partial groups that can appear when dimension is two less than order.
Finally, \cref{sec data format} briefly explains the structure of the file formats for the dataset \cite{partial_groups_data}.

\subsection{Related work: enumeration of IP loops} 
The underlying partial binary multiplication in a partial group is a partial version of an algebraic structure called an \emph{inverse property loop} (IP loop), a non-associative version of groups which includes the Moufang loops.
In particular, partial groups which have all binary multiplications defined are supported on an underlying IP loop.
Computer enumerations of the IP loops were previously undertaken by Ali and Slaney \cite{SlaneyAli:GLIP,AliSlaney:CLIP,iploops}.
Our work recovers the same number of IP loops through order 10.
For instance there are 47 IP loops of order 10 among the 697,209 \emph{binary partial groups} of that order.

\subsection*{AI tool usage}
This paper was written entirely by the author.
Code implementing the described algorithms was developed with the assistance of an LLM-based coding tool (Claude Code), with the author directing development and verifying correctness. 
The algorithms and mathematical content are the author's own work.

\subsection*{Acknowledgements}
The author is grateful to Justin Lynd and Walker Stern for helpful discussions related to this project, and to Justin Lynd and Edoardo Salati for excellent suggestions on a previous draft.

\section{Mathematical Background}

We begin with necessary background, building on the point of view on partial groups from \cite{HackneyLynd:PGSSS}.
The original definition of partial groups is due to Chermak in \cite{Chermak:FSL}, and the interpretation as simplicial sets appears in work of González \cite{Gonzalez:ETPGL}.

\subsection{Symmetric sets and partial groups}

Let $\fin$ be a skeleton of the category of finite nonempty sets; for concreteness, we take as objects the sets $[n] = \{0,1,\dots, n\}$ for $n\geq 0$.
A \emph{symmetric set} is a functor $X\colon \fin^\op \to \set$ whose value on $[n]$ will be denoted by $X_n$ (the \emph{set of $n$-simplices}) and whose value on $\alpha \colon [m] \to [n]$ in $\fin$ will be denoted by $\alpha^* \colon X_n \to X_m$.
We write $\sym$ for the category of symmetric sets, whose morphisms are natural transformations.
Notice that $\aut([n])$ is the symmetric group on $n+1$ letters $\{0,1,\dots, n\}$, so $X_n$ naturally has a right $\Sigma_{n+1}$-action.

In particular, $X_1$ is canonically an \emph{involutive set}, via the nontrivial automorphism of $[1]$.
As the unique map $[1] \to [0]$ is split, the corresponding $X_0 \to X_1$ is injective; elements in its image, here called \emph{identities}, always are fixed points for the involution.

\begin{example}\label{example mat}
If $S$ is a set, let $\mat(S)_n$ denote the set of $(n+1)$-by-$(n+1)$ matrices with entries drawn from $S$, i.e. the set of functions $[n] \times [n] \to S$.
This naturally assembles into a symmetric set: given $\alpha \colon [m] \to [n]$ and a function $[n] \times [n] \to S$, we can precompose by $\alpha \times \alpha$ to obtain a function $[m] \times [m] \to S$.
In other words, if $M = (f_{ij})_{0\leq i,j \leq n} \in \mat(S)_n$, then $\alpha^*M = (g_{ab})$ where $g_{ab} = f_{\alpha(a), \alpha(b)}$.
\end{example}

\begin{example}
If $X$ is any symmetric set, then there is a canonical map $\mu_X \colon X \to \mat(X_1)$.
For a $0\leq i,j\leq n$, let $\epsilon_{ij} = \epsilon_{ij}^n \colon [1] \to [n]$ be given by $0,1\mapsto i,j$.
If $x\in X_n$, then $\mu_X(x) = (\epsilon_{ij}^*x)_{0\leq i,j \leq n}$.
Matrices in the image of $\mu_X$ have special properties: for one, they are skew-symmetric with respect to the involution on $X_1$, for another, elements along the diagonal are in the image of $X_0 \to X_1$.
\end{example}

\begin{definition}
Let $X$ be a symmetric set and $n\geq 1$.
\begin{itemize}
\item The \emph{Segal map} $\edgemap_n \colon X_n \to X_1^{\times n}$ is the composite
\[ \begin{tikzcd}
X_n \rar{\mu_X} & \mat(X_1)_n \rar{\text{superdiagonal}} &[+1cm] X_1^{\times n}
\end{tikzcd} \]
sending $x$ to $(\epsilon_{01}^*x, \epsilon_{12}^*x, \dots, \epsilon_{n-1,n}^*x)$.
\item The \emph{Bousfield-Segal map} $\bous_n \colon X_n \to X_1^{\times n}$ is the composite
\[ \begin{tikzcd}
X_n \rar{\mu_X} & \mat(X_1)_n \rar{\text{tail of top row}} &[+1cm] X_1^{\times n}
\end{tikzcd} \]
sending $x$ to $(\epsilon_{01}^*x, \epsilon_{02}^*x, \dots, \epsilon_{0n}^*x)$.
\end{itemize}
\end{definition}
As explained by Grothendieck \cite[4.1]{Grothendieck:TCTGA3}, the categories of groups and groupoids embed into the category of symmetric sets.
A symmetric set is isomorphic to the nerve of a group if and only if it is \emph{reduced} (i.e. $X_0$ is a one-point set) and the Segal maps are bijections for all $n$.
In this case the $n$-ary group multiplication is given by the span
\begin{equation}\label{diag span} 
\begin{tikzcd}[sep=large]
X_1 \times \underset{n}{\cdots} \times X_1 & X_n \lar["\edgemap_n"', "\cong"] \rar["x\mapsto \epsilon_{0n}^*x"] & X_1.
\end{tikzcd} \end{equation}
This leads us to a definition of partial group \cite{Chermak:FSL}, based on Corollary 4.7 of \cite{HackneyLynd:PGSSS}.

\begin{definition}
A symmetric set is a \emph{partial group} if it is reduced and if the Segal map $\edgemap_n$ (alternatively, the Bousfield-Segal map $\bous_n$) $X_n \to X_1^{\times n}$ is injective for each $n\geq 1$.
The category of partial groups is the full subcategory of $\sym$ on the partial groups.
\end{definition}

For a partial group $X$, the span \eqref{diag span} endows $X_1$ with a partially-defined $n$-ary multiplication $X_1^{\times n} \nrightarrow X_1$.
Theorem 3.6 of \cite{HackneyLynd:PGSSS} implies that $\edgemap_n$ is injective for all $n\geq 2$ if and only if $\bous_n$ is injective for all $n\geq 2$. 
A \emph{spiny symmetric set} is a symmetric set where all (Bousfield-)Segal maps are injections.
The symmetric set $\mat(S)$ is not often spiny.

If $X$ is any spiny symmetric set, then $\mu_X \colon X \to \mat(X_1)$ is a monomorphism.
We often use $\mu_X$ to write $n$-simplices as $(n+1)$-by-$(n+1)$ matrices $(f_{ij})$.
Other papers in the partial groups literature consider `multipliable words' in $X_1$ as the primary representation of $n$-simplices; such a word $w = (g_1, \dots, g_n)$ then corresponds to $x\in X_n$ satisfying $\edgemap_n(x) = w$.

\subsection{Skeleta and Dimension}\label{skeleta and dimension}

An $n$-simplex $x$ in a symmetric set $X$ is \emph{nondegenerate} if there does not exist a pair $(y,\rho)$ consisting of an $m$-simplex $y$ and a noninvertible surjection $\rho \colon [m] \twoheadrightarrow [n]$ such that $x = \rho^*y$.
If $X$ is a spiny symmetric set, $x\in X_n$, and $(x_{ij}) = \mu_X(x)$, then the following are equivalent (see \cite[Lemma 2.13]{HackneyLynd:HSSPG} and \cite[Lemma 7]{HackneyMolinier:DPG}):
\begin{enumerate}
\item $x$ is nondegenerate.
\item If $x_{ij}$ is an identity (i.e. in the image of $X_0 \to X_1$), then $i=j$.
\item No row of $(x_{ij})$ contains a repeated element.
\item $\bous_n(x)$ consists of distinct, nonidentity elements.
\end{enumerate}

Note: a degenerate element may have all nonidentity elements on its superdiagonal.

\begin{definition}
Let $n \geq -1$ be an integer. 
A symmetric set is \emph{$n$-skeletal} if all $m$-simplices for $m > n$ are degenerate.
The \emph{$n$-skeleton} of a symmetric set $X$, denoted $\sk_n X$, is the smallest symmetric subset of $X$ containing all $k$-simplices for $k$ at most $n$.
A symmetric set is said to have dimension $n$ if it is $n$-skeletal but not $(n{-}1)$-skeletal.
\end{definition}

If $X$ and $Y$ are finite dimensional partial groups, then their coproduct $X \vee Y$ has $\dim(X\vee Y) = \max(\dim(X), \dim(Y))$.

The $n$-skeletal symmetric sets may be regarded as presheaves on a finite category: let $\fin_{\leq n} \subset \fin$ be the full subcategory whose objects are $[k]$ for $0\leq k \leq n$, and write $\iota$ for the inclusion functor.
Generally $\sk_n X = \iota_!\iota^* X$ is the left Kan extension of the restriction of $X$ to $\fin_{\leq n}^\op$.
Moreover if $Z \colon \fin_{\leq n}^\op \to \set$ is any object which is spiny, then its left Kan extension $\iota_! Z \colon \fin^\op \to \set$ is also spiny \cite[Theorem 9]{HackneyMolinier:DPG}. 

The following appears in \cite{HackneyMolinier:DPG} (Corollaries 15 and 19).

\begin{theorem}\label{hm thm}
If $X$ is a partial group of order $n+1$, then $X$ is $n$-skeletal; it is isomorphic to the nerve of a group if and only if its dimension is $n$.
\end{theorem}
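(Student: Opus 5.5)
The plan is to prove both halves using the characterization of nondegenerate simplices recalled in \cref{skeleta and dimension}. Write $X_1$ for the underlying set, of cardinality $n+1$; since $X$ is reduced there is exactly one identity, call it $e$, so there are $n$ nonidentity elements.

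\textbf{$n$-skeletality.} Let $x \in X_m$ be nondegenerate. By criterion (4), $\bous_m(x) = (\epsilon_{01}^*x, \dots, \epsilon_{0m}^*x)$ consists of $m$ distinct nonidentity elements of $X_1$, so $m \le n$. Hence every $m$-simplex with $m > n$ is degenerate, and $X$ is $n$-skeletal. This step is a direct count.

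\textbf{Groups have dimension $n$.} If $X \cong NG$ with $|G| = n+1$, list the nonidentity elements as $g_1, \dots, g_n$. The word $(g_1, g_1^{-1}g_2, \dots, g_{n-1}^{-1}g_n)$ is multipliable, and it gives an $n$-simplex whose Bousfield--Segal image is $(g_1, \dots, g_n)$. These entries are distinct and nonidentity, so the simplex is nondegenerate by (4), and the dimension is exactly $n$.

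\textbf{Dimension $n$ forces a group.} This is the main part. Suppose $x \in X_n$ is nondegenerate, with matrix $(x_{ij}) = \mu_X(x)$. By (3), each row consists of $n+1$ distinct elements of $X_1$, and $x_{ii} = e$. Hence each row, and dually by skew-symmetry each column, is a bijection onto $X_1$. I then want to show that every binary product is defined, and that $X_2 \to X_1^{\times 2}$ is onto.

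Take arbitrary $a, b \in X_1$. Since row $0$ is a bijection, choose $j$ with $x_{0j} = a$. Since row $j$ is a bijection, choose $k$ with $x_{jk} = b$. The face $\langle 0,j,k\rangle^* x$ is a $2$-simplex with edges $(a,b)$. Note that repeated indices are allowed, since the face maps come from all maps $[2] \to [n]$ in $\fin$, so degenerate cases cause no trouble.

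So every pair is multipliable. Similarly, every word $(a_1, \dots, a_m)$ is realized: walk through the matrix, picking $i_0 = 0$ and then each $i_{t}$ so that $x_{i_{t-1} i_t} = a_t$, and take $\alpha^* x$ for $\alpha = (i_0, \dots, i_m)$. Its Segal image is $(a_1, \dots, a_m)$.

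It follows that all Segal maps are surjective. Combined with injectivity (the partial group axiom) and reducedness, they are bijections, so $X$ is the nerve of a group by Grothendieck's criterion recalled before the definition of partial groups.

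The expected obstacle is only this last realization step: making sure the faces along an arbitrary index sequence legitimately produce the desired word. This is exactly where using symmetric sets, with all maps of $\fin$ and not just monotone ones, makes the argument go through.
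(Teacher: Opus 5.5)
Your proof is correct and complete. The paper does not prove this statement itself: it quotes it from \cite{HackneyMolinier:DPG} (Corollaries 15 and 19). So your argument is a self-contained replacement that uses only the nondegeneracy criteria of \S\ref{skeleta and dimension} and the Grothendieck nerve criterion recalled before the definition of partial groups.

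The skeletality bound is immediate from criterion (4). The key idea in the converse is that the matrix of a nondegenerate $n$-simplex is a Latin square, because its rows have no repeats and $|X_1| = n+1$. Any word can then be traced through this matrix and realized as $\alpha^*x$ for a possibly non-injective $\alpha$. This gives surjectivity of every Segal map at once, and by injectivity it shows that $X$ is generated by the single simplex $x$.

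It is worth comparing this with the paper's proof of \cref{dim one less size}, where each row of a top simplex misses exactly one element. There the author must separately show that multiplication is total, by counting the rows that can miss a given element, and associative, via a $3$-simplex face. Full rows let you bypass both steps.

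One harmless slip in the ``groups have dimension $n$'' step concerns conventions. In the paper, $ab=c$ corresponds to the matrix with $x_{01}=b$, $x_{12}=a$, $x_{02}=c$, so $x_{0k} = x_{k-1,k}\cdots x_{01}$. Hence the Segal word whose Bousfield--Segal image is $(g_1,\dots,g_n)$ is $(g_1, g_2g_1^{-1}, \dots, g_ng_{n-1}^{-1})$, not $(g_1, g_1^{-1}g_2,\dots)$. Either way the conclusion holds, since $\bous_n$ is a bijection for the nerve of a group.
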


We now turn to the case of partial groups of dimension at most 2.

\begin{definition}\label{def bin partial group}
A \emph{binary partial group} (\emph{BPG}) is a unital partial magma $P$ along with a map $\dagger \colon P\to P$ such that if $ab$ is defined then $a^\dagger(ab) = b$ and $(ab)b^\dagger = a$.
\end{definition}
We emphasize that the multiplication on the left side of the equations are also defined whenever $ab$ is defined.
These were first introduced in \cite[\S4]{Baer:FSGG} (under a different name), and were later considered in \cite[II.B]{Tamari:PAMPMG}.
A BPG in which the multiplication is totally defined is the same data as an inverse property loop (IP loop) \cite[VII.1]{Bruck:SBS}.

Binary partial groups were shown to be equivalent to 2-skeletal partial groups in \cite{HackneyLyndSalati:PGPG}, and we will use this identification freely below.
If $P$ is a BPG, then a multiplication fact $ab=c$ in $P$ corresponds to a 2-simplex whose matrix form is
\[
\begin{bmatrix}
e & b & c \\
b^\dagger & e & a \\
c^\dagger & a^\dagger & e
\end{bmatrix}.
\]
There is thus a $\Sigma_3$-action on the set of multiplications in a BPG, and the equations in \cref{def bin partial group} can be regarded as the action by the generators $(12)$ and $(01)$ of $\Sigma_3 = \aut([2])$.
For instance, $(12)$ has the following effect on a matrix:
\[
\begin{bmatrix}
e & b & c \\
b^\dagger & e & a \\
c^\dagger & a^\dagger & e
\end{bmatrix}
\mapsto 
\begin{bmatrix}
e & c & b \\
c^\dagger & e & a^\dagger \\
b^\dagger & a & e
\end{bmatrix},
\]
and so sends the equation $ab=c$ to the equation $a^\dagger c = b$, i.e.\ $a^\dagger(ab)=b$.
A BPG may also be represented by a (partial) Cayley table, where some entries may be blank (see \cref{sec tiny order} for examples).

Given a BPG $P$ the authors of \cite{HackneyLyndSalati:PGPG} construct a maximal partial group $BP$ whose 2-skeleton corresponds to $P$.
The set $BP_n \subseteq P^{\times n}$ consists of those words such that for every parenthesization, the iterated multiplication in $P$ is defined, and all of these iterated multiplications yield the same element of $P$.
The equivalence of BPGs with 2-skeletal partial groups sends $P$ to $\sk_2 BP$.
As was known to Hackney--Lynd--Salati, symmetric sets of the form $BP$ are precisely the \emph{2-coskeletal} partial groups (see \cref{deg 2 partial groups}).

\section{Partial groups via generating matrices}\label{sec:pg via gen matrix}
In this section we describe when and how a single matrix, or a collection of matrices, generates a partial group (or more generally a spiny symmetric set).
We identify the smallest such pieces in the 2-dimensional case, the \emph{atomic BPGs} which are the BPGs freely generated by a single nontrivial multiplication.

\subsection{From matrices to spiny symmetric sets}\label{sec matrices to SSS}
A basic question is when the symmetric subset of $\mat(S)$ generated by a matrix $M\in \mat(S)_n$ is spiny. 

We work over an involutive, reflexive, directed graph $\Gamma$.
This is the same as a presheaf on $\fin_{\leq 1}$, that is, a diagram of sets of shape
\[ \begin{tikzcd}
\Gamma_0 \rar["\id" description] & \Gamma_1 \lar[shift left=2, "t"] \lar[shift right=2, "s"'] \arrow[loop right, out=30, in=330, looseness=4, "\dagger "]
\end{tikzcd} \]
such that $(e^\dagger)^\dagger = e$ and $t(e^\dagger) = s(e)$ for all $e\in \Gamma_1$, and $s(\id_v) = v = t(\id_v)$ and $(\id_v)^\dagger = \id_v$ for all $v\in \Gamma_0$.

\begin{definition}\label{def suitable}
If $\Gamma$ is such a graph we will say that a matrix $M = (a_{ij})\in \mat(\Gamma_1)_n$ is \emph{suitable} if the following conditions hold:
\begin{enumerate}
\item $M$ is skew-symmetric: $a_{ij} = a_{ji}^\dagger$. \label{suitable skew}
\item All diagonal entries of $M$ are identities: $a_{ii} = \id_v$. \label{suitable diag}
\item All entries on a given row have the same source: $s(a_{ij}) = s(a_{ik})$. \label{suitable sources}
\item If there are $i,j,k$ and $p,q,r$ such that $a_{ij} = a_{pq}$ and $a_{ik} = a_{pr}$, then $a_{jk} = a_{qr}$.\label{suitable BS cond}
\end{enumerate}
\end{definition}

If $X$ is any symmetric set and $\Gamma$ is its restriction to $\fin_{\leq 1}$, then the first three conditions hold for any $M\in \mat(X_1)$ in the image of $\mu_X$.
The last condition holds when $X$ is spiny.

Any matrix $M \in \mat(\Gamma_1)_n$ generates a symmetric subset $X \subseteq \mat(\Gamma_1)$, where $X_k$ consists of those matrices $\alpha^*M = (a_{\alpha(i),\alpha(j)})$ where $\alpha\colon [k] \to [n]$ is a function.
In other words, $X$ is the smallest symmetric subset of $\mat(\Gamma_1)$ containing $M$.
If $M$ is suitable, then so is every element of $X$.
In this case we write $\lr{M}_0 \subseteq \Gamma_0$ for the collection of those $v$ such that $\id_v$ appears on the diagonal of $M$, and $\lr{M}_1 \subseteq \Gamma_1$ for the set of elements appearing in $M$.
The assignments $v \mapsto [\id_v]$ and $(a\colon v\to w) \mapsto \begin{bsmallmatrix}
\id_v & a \\
a^\dagger & \id_w
\end{bsmallmatrix}$ give bijections $\lr{M}_k \cong X_k$ for $k =0,1$.
We set $\lr{M}_k = X_k$ for $k\geq 2$, and identify $\lr{M}$ and $X$ as symmetric sets.
We'll use the following generally useful lemma to show that $X\cong \lr{M}$ is spiny.

\begin{lemma}\label{spininess for mat subobjects}
Suppose $Y$ is a symmetric set and $\mu_Y \colon Y \to \mat(Y_1)$ is injective. 
If either $\edgemap_2$ or $\bous_2$ from $Y_2$ to $Y_1 \times Y_1$ is injective, then $Y$ is spiny.
\end{lemma}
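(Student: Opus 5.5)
We prove that $\bous_n$ is injective for every $n \geq 1$; by the remark following the definition of partial groups (Theorem 3.6 of \cite{HackneyLynd:PGSSS}), this is equivalent to injectivity of all $\edgemap_n$. First we reduce the $\edgemap_2$ hypothesis to the $\bous_2$ hypothesis, and then we induct on $n$ using the matrix representation.

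\emph{The case $n=1$.} Here $\bous_1 = \edgemap_1$ sends $y\in Y_1$ to $\epsilon_{01}^*y$. Since $\epsilon_{01}=\id$, this map is the identity, hence injective.

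\emph{Reduction to $\bous_2$.} Suppose $\edgemap_2$ is injective and $y,y'\in Y_2$ satisfy $\bous_2(y)=\bous_2(y')$, i.e.\ $y_{01}=y'_{01}$ and $y_{02}=y'_{02}$. Let $\tau=(01)\in\Sigma_3$. The superdiagonal of $\tau^*y$ is $(y_{10},y_{02}) = (y_{01}^\dagger, y_{02})$, and likewise for $y'$. The identity $y_{10} = y_{01}^\dagger$ holds because $\epsilon_{10}=\epsilon_{01}\circ\sigma$, where $\sigma$ is the swap of $[1]$. So $\edgemap_2(\tau^*y)=\edgemap_2(\tau^*y')$, which gives $\tau^*y=\tau^*y'$ and hence $y=y'$. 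Thus we may assume $\bous_2$ is injective.

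\emph{Induction for $n\geq 3$.} Because $\mu_Y$ is injective, it suffices to show that the top row $(y_{0j})_j$ of $\mu_Y(y)$ determines every entry $y_{jk}$. Fix $1\le j,k\le n$ and let $\alpha\colon[2]\to[n]$ be $0,1,2\mapsto 0,j,k$. By naturality of $\mu_Y$, the matrix of $\alpha^*y$ is $(y_{\alpha(a)\alpha(b)})$. Its top row is $(e, y_{0j}, y_{0k})$, so $\bous_2(\alpha^*y)=(y_{0j},y_{0k})$.

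Now suppose $y,y'\in Y_n$ with $\bous_n(y)=\bous_n(y')$. Then $\bous_2(\alpha^*y)=\bous_2(\alpha^*y')$, so injectivity of $\bous_2$ gives $\alpha^*y=\alpha^*y'$. Reading off the $(1,2)$ entry yields $y_{jk}=y'_{jk}$.

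It remains to handle the diagonal entries and the entries $y_{j0}$. The diagonal entries $y_{jj}$ are degeneracies of $0$-simplices; indeed $\epsilon_{jj}$ factors through $[0]$, so $y_{jj}$ is determined by the vertex $j$ of $y$. This vertex is the image of $y_{0j}$ under the target map $[0]\xrightarrow{1}[1]$, so it is also determined by the top row. Alternatively, the $j$ and $k$ above need not be distinct, so the same argument covers $j=k$. The entries $y_{j0}$ equal $y_{0j}^\dagger$.

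Hence $\mu_Y(y)=\mu_Y(y')$, and so $y=y'$. No genuine induction was needed: the case $n=2$ controls all entries directly.

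The only subtle step is checking that $\mu_Y$ commutes with restriction along $\alpha$, which is immediate since $\epsilon_{ab}^*\alpha^* = (\alpha\epsilon_{ab})^* = \epsilon_{\alpha(a)\alpha(b)}^*$. We also need to note that $\alpha$ need not be injective, which is why the diagonal case poses no problem.
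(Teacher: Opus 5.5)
Your proof is correct. For the $\bous_2$ hypothesis it is exactly the paper's argument: restricting along $0,1,2\mapsto 0,j,k$ shows that the top row of $\mu_Y(y)$ determines every entry, and injectivity of $\mu_Y$ finishes.

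The difference is in the $\edgemap_2$ case. The paper runs a separate induction that proves each $\edgemap_n$ injective directly. If $\edgemap_n(y)=\edgemap_n(y')$, the inductive hypothesis gives $d_0y=d_0y'$ and $d_ny=d_ny'$, which fixes every matrix entry except the corner $a_{0n}$. That corner is recovered by applying $\edgemap_2$ to the restriction along $0,1,2\mapsto 0,n-1,n$. You instead observe that $\edgemap_2\circ(01)^*=(\dagger\times\id)\circ\bous_2$ on $Y_2$. So injectivity of $\edgemap_2$ and of $\bous_2$ are equivalent for any symmetric set, with no hypothesis on $\mu_Y$. This collapses the lemma to a single case and is shorter.

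What the paper's route buys is that it produces the Segal maps $\edgemap_n$ themselves, using only order-preserving maps. Yours relies essentially on the transposition $(01)$ and yields only the $\bous_n$, so you must invoke the cited equivalence of the two families. The paper itself does the same in its $\bous_2$ case. Combining your level-$2$ observation with the paper's induction would give both families with no citation at all.

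One small slip: your description of diagonal entries as targets of $y_{0j}$ only covers $j\geq 1$. Nothing is lost, because $y_{00}$ is the $(0,0)$ entry of $\alpha^*y=\alpha^*y'$, or equivalently the source of $y_{01}$.
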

\begin{proof}
Below we write $\mu_Y(y) = (a_{ij})$ and $\mu_Y(y') = (a'_{ij})$ for the matrix forms of simplices $y,y'$ in $Y$.

Suppose $\bous_2$ is injective, and $\bous_n(y) = \bous_n(y')$, i.e.\ $a_{0i} = a'_{0i}$ for $i=1, \dots, n$.
If $\alpha_{ij} \colon [2] \to [n]$ is given by $0,1,2 \mapsto 0,i,j$, then $\bous_2(\alpha_{ij}^* y) = (a_{0i},a_{0j})$ is equal to $\bous_2(\alpha_{ij}^* y') = (a_{0i}', a_{0j}')$, hence $\alpha_{ij}^*y = \alpha_{ij}^*y'$. This implies $a_{ij} = a_{ij}'$.

Suppose $\edgemap_2$ is injective. 
For $n > 2$, inductively assume $\edgemap_{n-1}$ is injective. If $\edgemap_n(y) = \edgemap_n(y')$, then $\edgemap_{n-1}(d_iy) = \edgemap_{n-1}(d_iy')$ for $i=0,n$, and hence $d_iy = d_iy'$ for $i=0,n$. 
So $a_{ij} = a'_{ij}$ unless $\{i,j\} = \{0,n\}$.
The only question is to compare $a_{0n}$ and $a'_{0n}$.
The map $\alpha_{n-1,n}$ does the trick: $\edgemap_2(\alpha_{n-1,n}^*y) = (a_{0,n-1}, a_{n-1,n}) = \edgemap_2(\alpha_{n-1,n}^*y')$, so $\alpha_{n-1,n}^*y = \alpha_{n-1,n}^*y'$, and hence $a_{0n} = a'_{0n}$. 
\end{proof}

\begin{theorem}\label{suitable to spiny}
If $M \in \mat(\Gamma_1)_n$ is suitable, then $\lr{M}$ is spiny.
\end{theorem}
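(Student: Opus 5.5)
The plan is to apply \cref{spininess for mat subobjects} with $Y = \lr{M}$. Two things must be checked: that $\mu_Y \colon Y \to \mat(Y_1)$ is injective, and that $\bous_2 \colon Y_2 \to Y_1 \times Y_1$ is injective.

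For the first point, we use the identifications $Y_1 \cong \lr{M}_1$ and $Y_k \subseteq \mat(\Gamma_1)_k$. Under these, $\mu_Y$ is essentially the inclusion. Concretely, take $x = \alpha^*M = (a_{\alpha(i)\alpha(j)}) \in Y_k$. Then $\epsilon_{ij}^* x$ is the $2$-by-$2$ matrix
\[
\begin{bmatrix} a_{\alpha(i)\alpha(i)} & a_{\alpha(i)\alpha(j)} \\ a_{\alpha(j)\alpha(i)} & a_{\alpha(j)\alpha(j)} \end{bmatrix}.
\]
By suitability (skew-symmetry, identity diagonal entries, and condition \eqref{suitable sources} together with the graph axioms), this matrix equals $\begin{bsmallmatrix} \id_v & a \\ a^\dagger & \id_w\end{bsmallmatrix}$ with $a = a_{\alpha(i)\alpha(j)}$, $v = s(a)$ and $w = t(a)$. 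So it corresponds to the entry $a$ under the bijection $\lr{M}_1 \cong Y_1$. Hence $\mu_Y(x)$ recovers the matrix $x$ entrywise, and $\mu_Y$ is injective.

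One subtlety deserves a check here. The diagonal entry $\id_v$ must agree with the source of the off-diagonal entries in its row. We have $s(a_{ii}) = v$ since $a_{ii} = \id_v$, so condition \eqref{suitable sources} gives $s(a_{ij}) = v$. For the column entry, $t(a_{ij}) = s(a_{ji}) = w$ by skew-symmetry and $t(e^\dagger)=s(e)$. This shows the $2$-by-$2$ face is determined by its $(0,1)$ entry.

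For the second point, take two $2$-simplices $x = \alpha^*M$ and $x' = \beta^*M$ with the same top-row tail. This means $a_{\alpha(0)\alpha(1)} = a_{\beta(0)\beta(1)}$ and $a_{\alpha(0)\alpha(2)} = a_{\beta(0)\beta(2)}$. Set $i=\alpha(0)$, $j=\alpha(1)$, $k=\alpha(2)$ and $p=\beta(0)$, $q=\beta(1)$, $r=\beta(2)$. Condition \eqref{suitable BS cond} then gives $a_{jk} = a_{qr}$.

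The remaining entries are forced:
\begin{itemize}
\item the first-column entries are the $\dagger$ of the top-row entries, by skew-symmetry;
\item $a_{kj}=a_{jk}^\dagger$, also by skew-symmetry;
\item the diagonal entries are the identities at the sources of their rows, which are already determined by the off-diagonal entries, as noted above.
\end{itemize}
Thus $x = x'$ as matrices, and $\bous_2$ is injective. \cref{spininess for mat subobjects} then yields that $\lr{M}$ is spiny.

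I expect the main obstacle to be the careful bookkeeping in the first point rather than any deep difficulty. One must pin down exactly how $Y_1$ is identified with $\lr{M}_1$, and check that $\mu_Y$, read through this identification, really is the inclusion into $\mat(\Gamma_1)$, including the degenerate $\alpha$ where indices repeat. The diagonal-consistency argument above handles the degenerate case uniformly.
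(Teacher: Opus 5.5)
Your proposal is correct and follows the paper's proof: both apply \cref{spininess for mat subobjects} by noting that condition \eqref{suitable BS cond} is exactly injectivity of $\bous_2$ on $\lr{M}$, with the remaining entries of each $2$-simplex forced by the first three suitability conditions. The only difference is that you explicitly verify that $\mu_Y$ is injective, which the paper leaves implicit in its identification of $\lr{M}$ with a symmetric subset of $\mat(\Gamma_1)$.
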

\begin{proof}
Condition \eqref{suitable BS cond} is exactly stating that the Bousfield--Segal map $\bous_2$ is injective for $\lr{M}$, so the result follows from \cref{spininess for mat subobjects}.
More explicitly, let $\alpha^*M, \beta^*M \in \lr{M}_2$ be two matrices with $\alpha$ sending $0,1,2$ to $i,j,k$ and $\beta$ sending $0,1,2$ to $p,q,r$.
Then
\[
  \bous_2(\alpha^*M) = \bous_2 \left(
  \begin{bmatrix}
    \id & a_{ij} & a_{ik} \\
    a_{ji} & \id & a_{jk} \\
    a_{ki} & a_{kj} & \id
  \end{bmatrix}
  \right) = (a_{ij}, a_{ik})
\]
and similarly $\bous_2(\beta^*M) = (a_{pq}, a_{pr})$.
Noting that $\alpha^*M$ and $\beta^*M$ are uniquely determined by their upper triangles by the first three conditions of suitability, we see that $\bous_2$ is injective if and only if \eqref{suitable BS cond} holds.
\end{proof}

\begin{remark}
Given a set $S$, one can begin with a raw matrix $M\in \mat(S)$ and ask whether $M$ generates a spiny symmetric set.
In this case, let $\Gamma_0 \subseteq S$ be the set of elements appearing on the diagonal of $M$, and $\Gamma_1 \subseteq S$ be the set of elements appearing in $M$.
We attempt to define an involution on $\Gamma_1$ by sending $a_{ij}$ to $a_{ji}$, a source map $s\colon \Gamma_1 \to \Gamma_0$ sending $a_{ij}$ to $a_{ii}$, and a target map $t\colon \Gamma_1 \to \Gamma_0$ sending $a_{ij}$ to $a_{jj}$.
Of course any of these three functions may fail to be well-defined!
But if they're well-defined then $\Gamma$ constitutes an involutive, reflexive, directed graph. 
Only condition \eqref{suitable BS cond} then needs to be checked in order for $M$ to be suitable in $\mat(\Gamma_1)$.
\end{remark}

\begin{remark}
As our focus on this paper is on partial groups, we may restrict our attention to (involutive, reflexive, directed) graphs where $\Gamma_0$ is a one-point set.
These are the same thing as pointed involutive sets.
Notice that Condition \eqref{suitable sources} of suitability is automatic in this case.
\end{remark}

\begin{definition}
Let $\Gamma$ be a graph and let $M = (a_{ij})$ and $M' = (b_{ij})$ be two suitable matrices over $\Gamma$.
We say that $M$ and $M'$ are \emph{compatible} if whenever there are $i,j,k$ and $p,q,r$ such that $a_{ij} = b_{pq}$ and $a_{ik} = b_{pr}$, then $a_{jk} = b_{qr}$.
\end{definition}

\begin{proposition}\label{prop pairwise compatible}
Let $\Gamma$ be a graph and let $S = \{M_\alpha\}$ be a set of suitable matrices over $\Gamma$.
Then $\bigcup \lr{M_\alpha}$ is spiny if and only if the elements of $S$ are pairwise compatible.
\end{proposition}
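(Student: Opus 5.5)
The plan is to reduce everything to injectivity of $\bous_2$, exactly as in the proof of \cref{suitable to spiny}. Write $Y = \bigcup \lr{M_\alpha} \subseteq \mat(\Gamma_1)$, the union of the symmetric subsets generated by each $M_\alpha$. Since $Y$ is a symmetric subset of $\mat(\Gamma_1)$, its $1$-simplices are $2$-by-$2$ matrices, which may be identified with the entries of $\Gamma_1$ appearing in some $M_\alpha$. With this identification, $\mu_Y$ is just the inclusion $Y \hookrightarrow \mat(Y_1)$, so in particular it is injective. By \cref{spininess for mat subobjects}, $Y$ is spiny if and only if $\bous_2 \colon Y_2 \to Y_1 \times Y_1$ is injective. (The converse direction is trivial, since spiny means all $\bous_n$ are injective.)

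Next I describe $Y_2$. Every element of $Y_2$ has the form $\gamma^* M_\alpha$ for some $\alpha$ and some function $\gamma\colon [2]\to[n_\alpha]$ sending $0,1,2 \mapsto i,j,k$. Such an element is the matrix with upper triangle $(a_{ij}, a_{ik}, a_{jk})$ (entries of $M_\alpha$), and by conditions \eqref{suitable skew} and \eqref{suitable diag} it is determined by that upper triangle. Its image under $\bous_2$ is $(a_{ij},a_{ik})$. So $\bous_2$ is injective on $Y_2$ exactly when the following holds: for all $\alpha,\beta$ and indices $i,j,k$ in $M_\alpha = (a_{ij})$ and $p,q,r$ in $M_\beta = (b_{pq})$, the equalities $a_{ij}=b_{pq}$ and $a_{ik}=b_{pr}$ force $a_{jk}=b_{qr}$. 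I also need the diagonal identities to agree, but with $\Gamma_0$ arbitrary they are $\id_{s(a_{ij})}$ and $\id_{t(a_{ij})}$, which are determined by the edges.

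For $\alpha \ne \beta$, this condition is literally compatibility of $M_\alpha$ and $M_\beta$. For $\alpha=\beta$, it is condition \eqref{suitable BS cond}, which holds because each $M_\alpha$ is suitable; equivalently, each $M_\alpha$ is compatible with itself. Hence $\bous_2$ is injective if and only if the $M_\alpha$ are pairwise compatible, and both directions of the proposition follow.

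The only delicate point I foresee is bookkeeping rather than mathematics. One must check that $\mu_Y$ really is the inclusion, so that no two distinct matrices in $Y$ get identified. One must also handle the diagonal and source/target data when $\Gamma_0$ has more than one point, using condition \eqref{suitable sources}: the row index $i$ fixes the source vertex, so the $(0,0)$, $(1,1)$ and $(2,2)$ entries are recovered from $a_{ij}$ and $a_{ik}$.
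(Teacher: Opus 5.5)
Your proposal is correct and follows essentially the same argument as the paper. You reduce to injectivity of $\bous_2$ via \cref{spininess for mat subobjects}, and observe that injectivity of $\bous_2$ on the union amounts to self-compatibility (condition \eqref{suitable BS cond}) together with pairwise compatibility. You also spell out what the paper leaves implicit: the ``only if'' direction, injectivity of $\mu_Y$, and that diagonal entries are determined by the edges.
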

\begin{proof}
Each individual $\lr{M_\alpha}$ is spiny by \cref{suitable to spiny}.
A direct application of \cref{spininess for mat subobjects} gives that the union is spiny, as pairwise compatibility guarantees $\bous_2$ is injective.
\end{proof}

\subsection{Atomic binary partial groups}\label{ss abpgs}
A BPG is said to be \emph{atomic} if it is generated by a single nontrivial multiplication.
More precisely, we are looking at those BPGs which correspond to partial groups generated by a single nondegenerate 3-by-3 matrix
\begin{equation*}\label{generic 3-simplex}
  M = \begin{bmatrix}
    \id & b & c \\
    b^\dagger & \id & a \\
    c^\dagger & a^\dagger & \id
  \end{bmatrix}
\end{equation*}
representing $ab=c$. 
For $M$ to generate a partial group, we need $M$ to be suitable, i.e.\ to satisfy Condition \eqref{suitable BS cond} above.
Here, nondegeneracy means that $a,b,c$ are drawn from an involutive set consisting of nonidentity elements, or, alternatively, that there are no repeated elements in any row.
Up to isomorphism, there are seven such matrices:
\[
\begin{gathered}
\begin{bmatrix} 
  0  & 2 & 3\\
 -2  & 0 & 1\\
 -3  & -1 & 0\\
\end{bmatrix}
\qquad
\begin{bmatrix} 
  0  & 2 & 3\\
 -2  & 0 & 1\\
 3 & -1 & 0\\
\end{bmatrix}
\qquad
\begin{bmatrix} 
  0  & 2 & 3\\
 2  & 0 & 1\\
 3 & -1 & 0\\
\end{bmatrix}
\qquad
\begin{bmatrix} 
  0  & 2 & 3\\
  2  & 0 & 1\\
  3  &  1 & 0\\
\end{bmatrix}
\\
\begin{bmatrix} 
  0  & 1 & 2\\
 -1  & 0 & 1\\
 -2 & -1 & 0\\
\end{bmatrix}
\qquad
\begin{bmatrix} 
  0  & 1 & 2\\
 -1  & 0 & 1\\
  2 & -1 & 0\\
\end{bmatrix}
\qquad
\begin{bmatrix} 
  0  & 1 & -1\\
 -1  & 0 & 1\\
  1 & -1 & 0\\
\end{bmatrix}
\end{gathered}
\]
We'll name the BPGs these matrices generate after the \emph{type} of the underlying involutive set: $A^{3,0},A^{2,1},A^{1,2},A^{0,3}$ and $A^{2,0},A^{1,1},A^{1,0}$.
The first, $A^{3,0}$ is the free partial group on a length two word (denoted $\mathfrak{F}^2$ in \cite[Example 5.4]{HackneyLynd:PGSSS}), and the last is the group $C_3 \cong A^{1,0}$.

\begin{theorem}
This list of seven atomic BPGs is exhaustive.
\end{theorem}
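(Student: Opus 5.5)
Since an atomic BPG is determined up to isomorphism by its generating matrix, it suffices to classify, up to isomorphism of pointed involutive sets, the nondegenerate suitable $3$-by-$3$ matrices
\[ M = \begin{bmatrix} \id & b & c \\ b^\dagger & \id & a \\ c^\dagger & a^\dagger & \id \end{bmatrix}. \]
Here $\Gamma_0$ is a point, so conditions \eqref{suitable skew}--\eqref{suitable sources} of \cref{def suitable} hold by construction, and only \eqref{suitable BS cond} has to be checked. The data is a triple $(a,b,c)$ of nonidentity elements of an involutive set, taken up to relabeling. Nondegeneracy (no repeated element in a row) says $b\neq c$, $b^\dagger\neq a$, and $c^\dagger\neq a^\dagger$, i.e.\ $a\neq c$. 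So the only remaining freedom is which of the elements $a,a^\dagger,b,b^\dagger,c,c^\dagger$ coincide.

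First I would sort by the set $\{a,b,c\}$. Consider coincidences $x=y^\dagger$ or $x=y$ among $a,b,c$. The equalities $a=b$, $b=c$, $a=c$ are either excluded by nondegeneracy or force a repeated entry in a row. For instance, $a=b$ puts $b$ and $b^\dagger$ in row $1$ as $b^\dagger$ and $a=b$; this is fine only if $b\neq b^\dagger$. So $a=b$ is allowed, but then the Bousfield--Segal condition must be checked. The coincidences $a=b^\dagger$, $a=c$, $b=c$ are forbidden. The only cross-coincidences left to examine are $a=b$, $b=c^\dagger$, and $a=c^\dagger$, together with which elements are involutions ($x=x^\dagger$).

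The plan is a finite case analysis. In the first case $a,b,c$ lie in three distinct $\dagger$-orbits. Then nothing coincides across orbits, condition \eqref{suitable BS cond} is automatic since every entry pair occurs only once, and we get types $(3,0),(2,1),(1,2),(0,3)$ according to how many of the three are fixed by $\dagger$. Matching the first row of matrices gives the four listed.

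In the second case exactly two orbits occur. I would enumerate the identifications $a=b$, $b=c^\dagger$, $a=c^\dagger$ (each up to the $\Sigma_3$-symmetry of the multiplication fact $ab=c$, which permutes these), together with the choice of which orbit elements are involutions. For each, I would check condition \eqref{suitable BS cond} by listing the six ordered pairs (first row entry, second row entry) across the rows of $M$ and its permutations. By the $\Sigma_3$-symmetry all three identifications are equivalent, so I take $a=b$. Then suitability forces $b\neq b^\dagger$: otherwise rows $0$ and $2$ share pairs that conflict. And $c$ may or may not be an involution, which gives $A^{2,0}$ and $A^{1,1}$.

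In the third case there is one orbit. Then $a=b$ and $b=c^\dagger$, so $c=a^\dagger$ with $a\neq a^\dagger$, which gives $C_3$. Two identifications force the third, and if $a$ were an involution we would get $a=c$, contradicting nondegeneracy.

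The main obstacle is the bookkeeping in the two-orbit case, specifically verifying condition \eqref{suitable BS cond} to rule out the involution variants. I would handle it by using the $\Sigma_3$-action to reduce to a single normal form before checking.
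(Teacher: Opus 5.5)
Your proposal is correct and follows essentially the paper's route: a finite case analysis on which of the coincidences $a=b$, $a=c^\dagger$, $b=c^\dagger$ occur and which of $a,b,c$ are $\dagger$-fixed, with cases merged via the $\Sigma_3$-orbit of $ab=c$. Where the paper exhibits explicit $2$-simplices in $A^{1,2}$, $A^{2,1}$, $A^{1,1}$, $A^{2,0}$, you instead observe that the $\Sigma_3$-action permutes the identifications. Two small repairs are needed. First, your three-orbit case also requires that $\Sigma_3$-argument, because \emph{which} slot holds the fixed element(s) gives eight matrix classes, and these collapse to four BPGs only via the orbit. Second, in the two-orbit case $b\neq b^\dagger$ is forced by nondegeneracy of row $1$, not by a conflict between rows $0$ and $2$; in fact condition \eqref{suitable BS cond} holds automatically for every nondegenerate $3$-by-$3$ matrix here, so it never excludes anything.
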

\begin{proof}
The 3-by-3 matrices we are considering have $a\neq c$, $b\neq c$, and $a\neq b^\dagger$.
There are up to eight possible involutive sets (corresponding to whether $x=x^\dagger$ for $x=a,b,c$) $\{a,b,c,a^\dagger,b^\dagger,c^\dagger\}$ to consider having the additional properties $a\neq b$ and $a\neq c^\dagger \neq b$, yet we see only four matrices on the top line. The atomic BPG $A^{1,2}$ has among its 2-simplices
\[
\begin{bmatrix}
  0 & 1 & 2\\
 -1 & 0 & 3\\
  2 & 3 & 0
\end{bmatrix}
\quad \& \quad
\begin{bmatrix}
 0 & 2 & 1 \\
 2 & 0  & 3 \\
 -1 & 3  & 0
\end{bmatrix}
\]
so in fact this covers all of the cases where $x \neq x^\dagger$ for exactly one $x\in \{a,b,c\}$.
Meanwhile, $A^{2,1}$ has among its 2-simplices
\[
\begin{bmatrix}
  0 & 1 & -2\\
 -1 & 0 &  3\\
  2 & 3 &  0
\end{bmatrix}
\quad \& \quad
\begin{bmatrix}
 0  & 3 & -1\\
 3 &  0 &  2\\
 1 & -2 &  0
\end{bmatrix}
\]
so covers all of the cases where $x=x^\dagger$ for exactly one $x\in \{a,b,c\}$.

There is also the possibility of having $a=b$ (when $a\neq a^\dagger$), in which case $c$ could be a new element, or could be $a^\dagger$ itself.
In the former case, there are two possibilities: $c=c^\dagger$ and $c\neq c^\dagger$, so the bottom row represents all possibilities with $a=b$.
Finally, one might wonder about what happens when $c$ is $a^\dagger$ or $b^\dagger$.
But the four relevant cases are already handled by the bottom row, as
$A^{1,1}$ contains 
\[
\begin{bmatrix}
  0  & 2 & -1\\
  2  & 0 &  1\\
  1 & -1 &  0\\
\end{bmatrix}
\quad \& \quad
\begin{bmatrix}
   0 & 1 & -1\\
  -1 & 0 &  2\\
   1 & 2 &  0 
\end{bmatrix}
\]
and $A^{2,0}$ contains 
\[
\begin{bmatrix}
  0 & 2 &  1 \\
 -2 & 0 & -1 \\
 -1 & 1 &  0
\end{bmatrix}
\quad \& \quad
\begin{bmatrix}
  0 & -1 & 1 \\
  1 &  0 & 2 \\
 -1 & -2 & 0
\end{bmatrix}.
\]
We conclude that $A^{3,0},A^{2,1},A^{1,2},A^{0,3},A^{2,0},A^{1,1},A^{1,0}$ is the full list.
\end{proof}

\section{Enumerating partial groups of small order}
\label{sec algorithm}
An approach to enumerating all partial groups is the following:
\begin{enumerate}
\item Start with a fixed $n$-dimensional partial group $X$.\label{step 1}
\item Find all $(n{+}1)$-dimensional partial groups $Y$ with the property that $\sk_n Y = X$.\label{step 2}
\item Reduce this collection by modding out by isomorphisms fixing the $n$-skeleton: pick one representative from each isomorphism class.\label{step 3}
\item Increase $n$ and repeat this process for each representative $Y$ appearing in the previous step.\label{step 4}
\end{enumerate}
If we start with the unique 0-dimensional partial group, then in the limit we will obtain all partial groups up to isomorphism.

We wish instead to enumerate all partial groups of a fixed order.
For order 1, there is only the trivial group.
Otherwise, we start with a 1-dimensional partial group in \eqref{step 1}, which is more or less the same thing as an involutive set.

\subsection{The involutive set}\label{subsec inv set}
Given a partial group $X$, one can consider the involutive set $I = X_1 \setminus \id$ of nonidentity edges.
Up to isomorphism, a finite involutive set is fully specified by pair $(\mathtt{free},\mathtt{fixed}) = (a,b)$ of nonnegative integers, where $a$ represents the number of free orbits under the involution, and $b$ represents the number of fixed orbits.
For concreteness, we always suppose that $X_1$ is a set of integers of the form \[ X_1 = [-a,a+b] = \{0, \pm 1, \dots, \pm a, a+1, \dots, a+b\}\] 
where $0$ is the identity, the involution acts via negation on $[-a,a]$ and fixes $[a+1, a+b]$.

The automorphism group is the product of a hyperoctahedral group (group of signed permutation matrices) and a symmetric group: \[ \aut(I) = \aut(X_1) = \aut([-a,a]) \times \aut([a+1,a+b]) \cong (C_2 \wr \Sigma_a) \times \Sigma_b. \] 
(Here $\aut([-a,a])$ and $\aut(X_1)$ are groups of \emph{pointed} automorphisms -- automorphisms preserving the basepoint $0$).
Altogether $\aut(I)$ has $a! \cdot 2^a \cdot b!$ elements.

\subsection{Binary partial groups with a given underlying involutive set}\label{subsec BPG enum}
In this section we describe an algorithm for finding all binary partial groups whose nonidentity elements come from a fixed involutive set $I$ (briefly \emph{the BPGs on $I$}).
An \emph{atomic BPG on $I$} is an atomic BPG in the sense of \S\ref{ss abpgs} whose matrices have off-diagonal entries chosen from $I$.
In other words, each is specified by a single nontrivial multiplication.

Given a 2-skeletal partial group $Y$, we may decompose $Y$ as 
\[
  Y = (\sk_1 Y) \cup \lr{M_1} \cup \cdots \cup \lr{M_p}
\]
where the $M_i$ are nondegenerate 2-simplices of $Y$, one for each $\Sigma_3$-orbit (alternatively: $M_i \notin \lr{M_j}$ for $i\neq j$).
Each of the $\lr{M_i}$ is an atomic BPG.
We take this as our starting point for enumerating 2-dimensional partial groups with a fixed 1-skeleton: take the union of $\sk_1Y$ with several $\lr{M}$.
However it is not always possible to do so: we may have two $M,M' \in \mat(Y_1)$ such that the symmetric subset generated by $M$ and $M'$ is not a partial group (see \cref{prop pairwise compatible}).

\begin{definition}
Fix an involutive set $I$. 
\begin{itemize}
\item Suppose $P$,$P'$ are two BPGs on $I$. We say $P$ and $P'$ are \emph{compatible} if whenever the product of $a,b\in I$ is defined in both $P$ and $P'$, these products are equal.
\item The \emph{compatibility graph} of $I$ is the (simple) graph which has as its nodes the atomic BPGs on $I$, and edges exactly between the compatible atomic BPGs.
\end{itemize}
\end{definition}

\begin{proposition}
The binary partial groups on $I$ are in bijection with the set of cliques in the compatibility graph of $I$. \qed
\end{proposition}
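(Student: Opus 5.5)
The plan is to write down explicit maps in both directions and check that they are mutually inverse. Here a BPG on $I$ means a 2-skeletal partial group $Y$ with $Y_1 = I \sqcup \{\id\}$ and fixed involution, regarded as a symmetric subset of $\mat(Y_1)$ via the monomorphism $\mu_Y$. The identifications should be literal: two BPGs on $I$ are equal when they have the same multiplication facts, not merely when they are isomorphic.

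\emph{From BPGs to cliques.} Send $Y$ to the set $C(Y)$ of atomic BPGs $\lr{M}$, where $M$ ranges over the nondegenerate 2-simplices of $Y$. Each $\lr{M}$ is atomic, and its nonidentity entries lie in $I$, so it is a node of the compatibility graph. Any two members of $C(Y)$ are sub-BPGs of $Y$, so whenever both define a product $ab$, the two values agree with the value in $Y$. Hence $C(Y)$ is a clique.

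\emph{From cliques to BPGs.} Given a clique $\{\lr{M_1},\dots,\lr{M_p}\}$, form
\[
Y = \sk_1 \cup \lr{M_1} \cup \cdots \cup \lr{M_p} \subseteq \mat(I \sqcup\{\id\}).
\]
The main step is to show $Y$ is spiny, which I will do with \cref{prop pairwise compatible}. The matrices $M_i$ are suitable, and so are the 1-skeleton matrices. It remains to translate compatibility of BPGs into compatibility of matrices. Suppose $a_{ij} = b_{pq}$ and $a_{ik} = b_{pr}$ for entries of $M_s$ and $M_t$. Then the restrictions of $M_s$ and $M_t$ along $(i,j,k)$ and $(p,q,r)$ are 2-simplices with the same Bousfield--Segal image $(x,y)$. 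Each such 2-simplex encodes a multiplication fact, for instance $(x^\dagger y)$ or its equivalent via the $\Sigma_3$-action, namely the entry $a_{jk}$ equals the product $x^\dagger y$ in $\lr{M_s}$. Likewise $b_{qr}$ equals the same product $x^\dagger y$ in $\lr{M_t}$. Compatibility of the BPGs then forces $a_{jk} = b_{qr}$.

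Degenerate cases, where some entries are identities, are handled automatically: there the product involves the identity, and it is determined by unitality in both BPGs. So the union is spiny and reduced, and it is 2-skeletal because it is generated by 2-simplices. Therefore it is a BPG on $I$.

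\emph{Mutual inverses.} Starting from $Y$, the decomposition $Y = (\sk_1 Y)\cup\bigcup \lr{M_i}$ recorded before the definition shows that the union of $C(Y)$ recovers $Y$. Conversely, starting from a clique $K$ with union $Y$, it is clear that $K \subseteq C(Y)$. For the reverse inclusion, take a nondegenerate 2-simplex $N$ of $Y$. It lies in some $\lr{M_i}$, and since $\lr{M_i}$ is atomic, the nondegenerate 2-simplices of $\lr{M_i}$ form the single $\Sigma_3$-orbit of $M_i$. Hence $\lr{N} = \lr{M_i} \in K$.

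The main obstacle I expect is the careful translation between matrix compatibility and agreement of products. In particular, one must check that the index coincidences $a_{ij}=b_{pq}$, $a_{ik}=b_{pr}$ always come from a genuine product being defined in both BPGs, including when repeated indices produce degenerate 2-simplices.
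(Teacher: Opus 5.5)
Your proposal is correct and is essentially the argument the paper leaves implicit behind its \qed: decompose a 2-skeletal partial group as $(\sk_1 Y)\cup\lr{M_1}\cup\cdots\cup\lr{M_p}$, with one atomic piece per $\Sigma_3$-orbit of nondegenerate 2-simplices, and conversely apply \cref{prop pairwise compatible} to the union over a clique once compatibility of atomic BPGs is translated into compatibility of their generating matrices, as you do. One harmless convention slip: since $\begin{bsmallmatrix} e & b & c\\ b^\dagger & e & a\\ c^\dagger & a^\dagger & e\end{bsmallmatrix}$ encodes $ab=c$ in the paper, the entry $a_{jk}$ is the product $yx^\dagger$ rather than $x^\dagger y$; also, the repeated-entry case $x=y$ you worried about is automatic, because rows of the nondegenerate generators have no repeats, which forces $j=k$ and $q=r$, hence $a_{jk}=b_{qr}=e$.
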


\begin{figure}
\begin{tikzpicture}[every node/.style={draw, rounded corners=4pt, inner sep=3pt, font=\small, minimum width=1.65cm}, thick]     

  \node (A1) at (-2,0.75) {$1\cdot1=-2$};  
  \node (A2) at (0,0.75) {$1\cdot1=-1$};
  \node (A3) at (2,0.75) {$1\cdot1=2$};

  \node (B1) at (-2,-0.75) {$2\cdot2=1$};
  \node (B2) at (0,-0.75) {$2\cdot2=-2$};
  \node (B3) at (2,-0.75) {$2\cdot2=-1$};

  \draw (A1) -- (B1);
  \draw[shorten >=-0.75pt, shorten <=-0.75pt] (B1) -- (A2);
  \draw[shorten >=-0.75pt, shorten <=-0.75pt] (A2) -- (B3);
  \draw (B3) -- (A3);
  \draw[shorten >=-0.75pt, shorten <=-0.75pt] (A1) -- (B2);
  \draw[shorten >=-0.75pt, shorten <=-0.75pt] (B2) -- (A3);
  \draw (A2) -- (B2);

\end{tikzpicture}
\caption{Compatibility graph of atomic BPGs for $(2,0)$}\label{fig compat20}
\end{figure}

\begin{example}\label{ex compat20}
\Cref{fig compat20} is the compatibility graph among all of the atomic BPGs on the involutive set $I = \{\pm 1, \pm 2\}$.
The labels on the nodes are a generating multiplication for the associated atomic BPG. 
For instance the atomic BPG in the top center has two nontrivial multiplications: $1\cdot 1 = -1$, as well as $-1 \cdot -1 = 1$.
There are six elements in the $\Sigma_3$-orbit of $1\cdot 1 = -2$ -- this node also includes the multiplications $-1 \cdot -2 = 1$, $-2 \cdot -1 = 1$, $-1 \cdot -1 = 2$, $1 \cdot 2 = -1$ and $2 \cdot 1 = -1$.
This node is not connected to $2\cdot 2 = -1$, as the orbit of $2\cdot 2 = -1$ contains $-2 \cdot -1 = 2$.
\end{example}

\begin{figure}

\begin{tikzpicture}[thick]

  \node[regular polygon, regular polygon sides=6,
        minimum size=3cm, draw=none] (hex) at (0,0) {};

  \foreach \i in {1,...,6}
    \node[circle, draw] (v\i) at (hex.corner \i) {}; 

  \foreach \i/\j in {1/2, 3/4, 5/6} {
    \coordinate (mid\i\j) at ($(v\i)!0.5!(v\j)$);
    \node[circle, draw] (m\i\j) at (mid\i\j) {}; 
  }

  \node[circle, draw, fill=Indigo] (center) at (0,0) {};

\node[circle, draw, fill=Indigo] (loose) 
  at ($(mid12)!0.5!(mid56)!-0.5cm!(center)$) {};

  \foreach \i in {1,...,6}
    \draw (center) -- (v\i);

  \foreach \i/\j in {1/2, 3/4, 5/6} {
    \draw (m\i\j) -- (v\i);
    \draw (m\i\j) -- (v\j);
  }

\draw[Maroon, thick] (v1) to[bend left=15] (v3);
\draw[Maroon, thick] (v5) to[bend left=15] (v1);
\draw[Maroon, thick] (v3) to[bend left=15] (v5);

\draw[DarkSlateBlue, thick] (v2) to[bend left=15] (v4);
\draw[DarkSlateBlue, thick] (v6) to[bend left=15] (v2);
\draw[DarkSlateBlue, thick] (v4) to[bend left=15] (v6);

\draw[OliveDrab, thick] (loose) -- (m12);
\draw[OliveDrab, thick] (loose) to[bend left=25] (m34);
\draw[OliveDrab, thick] (loose) -- (m56);
\draw[Indigo, thick] (center) -- (loose);

\end{tikzpicture}
\caption{Compatibility graph of atomic BPGs for $(1,3)$}\label{fig compat13}
\end{figure}

\begin{example}
\Cref{fig compat13} is the compatibility graph among all of the atomic BPGs on the involutive set $I = \{\pm 1, 2, 3, 4\}$.
We've arranged 9 of the 11 nodes on a regular hexagon. 
The six corner nodes correspond to $1\cdot a = b$ for $a\neq b$ in $\{2,3,4\}$, while the three nodes on hexagon edges correspond to $1\cdot 1 = a\in \{2,3,4\}$.
The center node is $1\cdot 1 = -1$, and the off-center node is $2\cdot 3 = 4$.
The maximal cliques have size 4.
\end{example}

We now have excellent combinatorial control over the collection of BPGs on $I$.
The standard Bron--Kerbosch algorithm for finding cliques in a graph may be used to find the full collection of BPGs \cite{BronKerbosch:A457}.
This means we've completed Step~\ref{step 2}.
However, we are only interested in enumerating partial groups \emph{up to isomorphism}.
As $I$ is fixed, this means we should quotient by the action of $\aut(I)$ on the collection of BPGs that we found.
Namely, the action of $\aut(I)$ on the set of matrices induces an action of $\aut(I)$ on the set of BPGs on $I$, and also induces an action of $\aut(I)$ on the compatibility graph of $I$.

For memory and speed reasons, it is best to compute the action of $\aut(I)$ on the set of cliques in the compatibility graph, choose a representative in each $\aut(I)$-orbit, and then form the associated BPG. 
(Rather than forming the collection of all BPGs on $I$, and then deduplicating using the $\aut(I)$-action.)

\begin{example}
For the $(2,0)$ case from \cref{ex compat20} and \cref{fig compat20} we see that there are seven cliques of size 2, six cliques of size 1, and the empty clique, so fourteen in total. It turns out there are six elements in the quotient, and one can compute this directly from the picture.
The order 8 group $\aut(I)$ is generated by two elements.
The generator $(1,2) \mapsto (2,1)$ exchanges the two middle vertices and swaps the corner vertices of the rectangle along diagonals.
The generator $(1,2)\mapsto (1,-2)$ on corners swaps horizontally across the central edge, and fixes the two central vertices.
So the size 2 cliques partition into 3 isomorphism classes, and the size 1 cliques partition into 2 isomorphism classes, as depicted in \cref{fig compat20 quotient}.
Along with the empty clique, this gives us the six BPGs on $I$.
\end{example}

\begin{figure}
\begin{tikzpicture}[every node/.style={draw, rounded corners=4pt, inner sep=3pt, font=\small, minimum width=1.65cm}, thick]     

  \node[fill=blue!15] (A1) at (-2,0.75) {$1\cdot1=-2$};  
  \node[fill=blue!15] (A2) at (0,0.75) {$1\cdot1=-1$};
  \node[draw=gray!50, dashed, text=gray!60] (A3) at (2,0.75) {$1\cdot1=2$};

  \node[draw=gray!50, dashed, text=gray!60] (B1) at (-2,-0.75) {$2\cdot2=1$};
  \node[draw=gray!50, dashed, text=gray!60] (B2) at (0,-0.75) {$2\cdot2=-2$};
  \node[draw=gray!50, dashed, text=gray!60] (B3) at (2,-0.75) {$2\cdot2=-1$};

  \draw (A1) -- (B1);
  \draw[shorten >=-0.75pt, shorten <=-0.75pt, dashed, gray!60] (B1) -- (A2);
  \draw[shorten >=-0.75pt, shorten <=-0.75pt, dashed, gray!60] (A2) -- (B3);
  \draw[dashed, gray!60] (B3) -- (A3);
  \draw[shorten >=-0.75pt, shorten <=-0.75pt] (A1) -- (B2);
  \draw[shorten >=-0.75pt, shorten <=-0.75pt, dashed, gray!60] (B2) -- (A3);
  \draw (A2) -- (B2);

\end{tikzpicture}
\caption{Representatives for isomorphism classes of cliques}\label{fig compat20 quotient}
\end{figure}

\begin{remark}[Implementation of BPGs]\label{bpg implementation}
In \cite{partialgroups_jl} a BPG is comprised of two pieces of data: an involutive set (effectively a pair (\texttt{free},\texttt{fixed}) of nonnegative integers) and a partial Cayley table.
The latter only records nontrivial multiplications, i.e.\ omits $0a = a = a0$ and $aa^\dagger = 0$. 
In particular, the trivial BPG on $I$ has an empty Cayley table.
\end{remark}

The task now for each BPG $P$ in our list is to compute all symmetric sets $X$ with $\sk_2 BP \subseteq X \subseteq BP$.
We have $\aut(P) = \aut(BP)$, and we only want to find a single $X$ in each $\aut(P)$-orbit.
We compute extensions one dimension at a time.

\subsection{Increasing the dimension}
Let $n\geq 3$, and fix a partial group $X$ with $\dim(X) = n-1$.
We explain how to find all possible $n$-dimensional extensions of $X$.
This process is more uniform than the transition from dimension $1$ to dimension $2$.

\begin{remark}[Implementation of partial groups]\label{rmk implementation}
Partial groups are stored as a pair consisting of a BPG (as in \cref{bpg implementation}) and an array whose elements are nondegenerate $k$-simplices for $k\geq 3$, each stored as an integer matrix with entries from the involutive set (\S\ref{subsec inv set}).
For 2-skeletal partial groups, the array is empty.
The first entry of the array is the set of nondegenerate $3$-simplices (should any exist), and the last element of the array is the set of nondegenerate $n$-simplices, where $n$ is the dimension of the partial group.
\end{remark}
There is redundancy here since the top row or superdiagonal of the $(k+1)$-by-$(k+1)$ matrix is enough to recover the remainder in the presence of the underlying BPG, but to get at the full symmetric set structure the matrices are preferable.

Now on to the algorithm. 
The general idea is similar to the previous case, and all extensions will be of the form $X \cup \lr{M_1} \cup \cdots \lr{M_p}$ for matrices $M_i \in \mat(X_1)_n$, but there are two limitations: all faces of the $M_i$ must be in $X$, and each $M_i$ must be suitable (\cref{def suitable}).

In this paragraph we'll write simplices in terms of their associated word via $\edgemap$.
For each nondegenerate\footnote{Recall from \cref{rmk implementation} that our implementation \emph{only} stores nondegenerate simplices.} simplex $x = (a_1,\dots, a_{n-1}) \in X_{n-1}$, we'd like to know if $x$ is extendable to a nondegenerate word $w=(a_1,\dots, a_{n-1},a_n)$ in an extension of $X$, where $a_n \in I = X_1\setminus \{0\}$.
We first check that $w$ is not in a list \texttt{blocked} of words which have already been processed (see below).
If it's not, we check that $d_0w = (a_2, \dots, a_n)$ and $d_iw = (a_1, \dots, a_{i+1}a_i, \dots, a_n)$ for $1\leq i \leq n-1$ are nondegenerate elements of $X_{n-1}$.
For $i=n-1$, this involves checking that $a_n a_{n-1}$ is actually defined in the underlying BPG.

Satisfying this is \emph{not} generally enough to guarantee that $w$ is the superdiagonal of a suitable matrix $M$.
The only issue arises when $n=3$, in which case we may have selected a word $(a_1,a_2,a_3)$ where in the underlying BPG we have $a_3(a_2a_1) \neq (a_3a_2)a_1$.
We explicitly test for this when deciding whether or not the length 3 word $w$ is admissible.
For $n > 3$, there are no such issues, as the equation $d_id_jw = d_{j-1}d_iw$ holds for $i < j$: for $i < j-1$ this is clear since the distant multiplications don't interact, and for $i=j-1$ this holds since $d_0w$ or $d_nw \in X_{n-1}$ implies $(a_i, a_{i+1}, a_{i+2}) \in X_3$, and hence $(a_{i+2}a_{i+1})a_i = a_{i+2}(a_{i+1}a_i)$.
The identity $d_id_jw = d_{j-1}d_iw$ implies that $w\in BP$ where $P$ is the underlying BPG of $X$ \cite[\S3]{HackneyLyndSalati:PGPG}, so has an associated suitable matrix.

If the word $w$ satisfies the checks above, then it is admissible.
We compute its entire $\Sigma_{n+1}$-orbit, which we add to the list \texttt{blocked} since we only want to consider one admissible word from each orbit. 

We have now constructed a set of admissible length $n$ words.
The $n$-dimensional extensions of $X$ correspond to the nonempty subsets $S$ of the set of ($\Sigma_{n+1}$-orbits of) admissible words.
But we mustn't forget Step~\ref{step 3}: we compute the orbits of the $\aut(X)$ action on the powerset of admissible words, and choose one representative from each orbit. 
We are left with a collection of powersets of admissible words, and it only remains for each of these to construct the relevant $n$-dimensional partial groups $Y$, by adding to $X$ the matrix form of each admissible word, along with its $\Sigma_{n+1}$-orbit. 
(We also compute $\aut(Y) \leq \aut(X)$ as the stabilizer of $S\in \wp(\mathtt{admissible})$ to pass along.)

\subsection{Termination} Given a BPG $P$ on a finite involutive set $I$, there are only finitely many possible extensions of the 2-dimensional partial group $\sk_2 BP$.
Indeed, at each step going from dimension $n-1$ to $n$, there are only finitely many words to consider (as all are elements of $I^n$), so only finitely many extensions of a fixed dimension $n$.
But \cref{hm thm} says that the maximal dimension we must consider is $|I|$.

\section{Summary of results}\label{sec results}

The main numerical results from the enumeration of partial groups of order up to 9 is given in \cref{table partial group counts} and \cref{table indecomp partial group counts}.
Here \emph{indecomposable} means in the sense of coproducts rather than products (i.e.\ it means that $X$ is nontrivial and does not decompose as $Y \vee Z$ for $Y,Z$ nontrivial).\footnote{These counts were obtained with a simple filter. Indecomposability of a partial group is detectable at the level of underlying BPGs. Given a BPG $P$, the algorithm creates a connectivity graph whose vertices are the nonzero elements of $P$, there is an edge between $a$ and $a^\dagger$, and whenever $a\cdot b = c$, there is a clique consisting of $\{a,b,c\}$. The BPG is indecomposable if and only if this graph is connected, which we test via union-find.}
The tables are organized around the isomorphism type of the underlying involutive set.
We first group by order and then by f, the number of free orbits of the involution.
Notice the second table has two fewer lines than the first, as the $(0,2)$ and $(0,4)$ types only contain decomposable partial groups.
We have omitted the trivial group from the first table.
The accompanying dataset \cite{partial_groups_data} contains a complete list of these partial groups (see \cref{sec data format}).

The partial groups with order one more than dimension are exactly the groups (\cref{hm thm}), and the results here are as expected based on the involution type. 
More surprising is the situation when dimension is two less than order.
In \cref{table indecomp partial group counts} we see that the counts within an order grouping in the last two columns agree.
This pattern holds for all orders, as proved in \cref{dim one less size}: such indecomposable partial groups are exactly the $(n{-}1)$-skeleta of the groups in the adjacent column.

\begin{table}
\centering
\caption{Indecomposable partial groups by order, free, dimension}
\label{table indecomp partial group counts}
\begin{tabular}{l c r r r r r r r r r}
\toprule
& \multicolumn{8}{c}{dimension} & \\
\cmidrule(lr){3-10}
order & f & 1 & 2 & 3 & 4 & 5 & 6 & 7 & 8 & total \\
\midrule
2 & 0 & 1 &   &   &   &   &   &   &   & 1 \\
\addlinespace[4pt]
3 & 1 & 1 & 1 &   &   &   &   &   &   & 2 \\
\addlinespace[4pt]
4 & 0 &   & 1 & 1 &   &   &   &   &   & 2 \\
  & 1 &   & 1 & 1 &   &   &   &   &   & 2 \\
\addlinespace[4pt]
5 & 1 &   & 2 &   &   &   &   &   &   & 2 \\
  & 2 &   & 3 & 1 & 1 &   &   &   &   & 5 \\
\addlinespace[4pt]
6 & 0 &   & 1 & 2 &   &   &   &   &   & 3 \\
  & 1 &   & 6 & 16 & 1 & 1 &   &   &   & 24 \\
  & 2 &   & 12 & 16 & 1 & 1 &   &   &   & 30 \\
\addlinespace[4pt]
7 & 0 &   & 2 & 12 &   &   &   &   &   & 14 \\
  & 1 &   & 14 & 50 &   &   &   &   &   & 64 \\
  & 2 &   & 54 & 71 & 1 &   &   &   &   & 126 \\
  & 3 &   & 51 & 37 & 5 & 1 & 1 &   &   & 95 \\
\addlinespace[4pt]
8 & 0 &   & 8 & 379 & 116 & 37 & 1 & 1 &   & 542 \\
  & 1 &   & 64 & 2250 & 585 & 112 & 1 & 1 &   & 3013 \\
  & 2 &   & 339 & 2256 & 433 & 87 & 1 & 1 &   & 3117 \\
  & 3 &   & 548 & 1426 & 408 & 57 & 2 & 2 &   & 2443 \\
\addlinespace[4pt]
9 & 0 &   & 17 & 768 &   &   &   &   &   & 785 \\
  & 1 &   & 338 & 10492 & 109 & 34 &   &   &   & 10973 \\
  & 2 &   & 3122 & 21443 & 648 & 98 &   &   &   & 25311 \\
  & 3 &   & 9705 & 20405 & 999 & 86 & 1 &   &   & 31196 \\
  & 4 &   & 4460 & 11794 & 12919 & 5387 & 45 & 2 & 2 & 34609 \\
\midrule
all  &  &  2 & 18749 & 71420 & 16226 & 5901 & 52 & 7 & 2 & 112359 \\
\bottomrule
\end{tabular}
\end{table}

To produce all of the partial groups up through order 9 takes approximately 2 minutes on a MacBook Air M4 with 32GB of memory.
Generating all BPGs of a given order is an easier task than generating all partial groups of that order.
We also were able to compute all 697,209 BPGs of order 10 in around 20 minutes.

\subsection{The order 10 case}
Computing all partial groups of order 10 is substantially harder than order 9.
For each of the 697,209 BPGs of order 10 one must enumerate all partial groups supported on it.
This task is incredibly difficult when the BPG is one of the two groups of order 10, due to many nontrivial multiplications.

There are 6,587,392 partial groups of order 10 whose underlying BPG is not a group.
This computation required a slightly more careful approach to reduce memory usage compared to what worked for smaller order.
Rather than materializing all extensions at each simplicial dimension simultaneously, the tree of extensions is traversed depth-first, keeping only O(depth) extensions in memory at once.
This calculation took roughly one hour, with the set of order 10 BPGs as an input.

Using this same modified algorithm, we also were able to enumerate all partial groups supported on a group of order 10.
\begin{itemize}[left=0pt]
\item 
There are 73,190,564 partial groups whose underlying BPG is the cyclic group $C_{10}$.
This enumeration took 11.6 days, nine of which were devoted to computing 42,517,128 extensions of $\sk_3 BC_{10}$.
\item 
There are 99,159,047 partial groups whose underlying BPG is the dihedral group $D_{10}$.
This enumeration took 16.8 days, of which 12.6 were devoted to computing 42,250,818 extensions of $\sk_3 BD_{10}$.
\end{itemize}

\subsection{Limitations for higher order}
A full enumeration in order 11 appears infeasible due to a combinatorial explosion.
Even the 2-dimensional case appears out of reach. 
We computed all 697,209 BPGs of order 10 in roughly 20 minutes, but order 11 is much harder: for each of the six involutive set types, the compatibility graph has 120--125 nodes and 5880--6470 edges (in contrast to ${\sim}85$ nodes/${\sim}2900$ edges for order 10), and the number of cliques (and hence BPGs) will be enormous.

\section{The degree of a partial group}\label{sec degree}
In this section we recall the \emph{(higher Segal) degree} of a spiny symmetric set $X$, following \cite[\S4.2]{HackneyLynd:HSSPG}, and explain an implementation for computing degree.
The degree invariant is derived from the higher Segal conditions for simplicial objects, which are based on the combinatorics of triangulations of cyclic polytopes and have connections to higher algebraic K-theory \cite{Poguntke:HSSAKT}, the Dold--Kan correspondence \cite{DyckerhoffJassoWalde}, and other areas; see \cite{Dyckerhoff:CPOC} for a survey.
In the context of partial groups, the higher Segal conditions are closely related to intersection patterns of closed subsets in closure spaces \cite{HackneyLynd:HSSPG}, and to a form of associativity \cite[\S3]{HackneyLyndSalati:EPG}.

\subsection{Computing degree}
When computing degree, it is helpful to work with the Bousfield--Segal map $\bous_n$ and the associated starry word complex $\bswords X$ \cite[Definition 2.10]{HackneyLynd:HSSPG}.
Level-wise, $\bswords X$ is given by $\bswords X_0 = X_0$, and for higher $n$ 
\[
  \bswords X_n = \{ (f_1, \dots, f_n) \mid d_1 f_1 = \dots = d_1 f_n \} \subseteq X_1^{\times n}.
\]
It is naturally a presheaf on the subcategory of $\fin$ on those maps which fix $0$ via $\alpha^*(f_1, \dots, f_n) \coloneq (f_{\alpha(1)}, \dots, f_{\alpha(n)})$ with the understanding that $f_0$ is an identity with the same source as the edges $f_1, \dots, f_n$.
In particular, for $1 \leq i \leq n$, the face map $d_i$ simply deletes entry $i$ and the degeneracy $s_i$ duplicates $f_i$, while $s_0$ inserts an identity at the beginning of the word.
The Bousfield--Segal maps combine into an injective map of presheaves $\bous \colon X \to \bswords X$.

\begin{definition}\label{def degree}
A spiny symmetric set $X$ has \emph{degree at most $k$} (for $k\geq 1$) if for each $n\geq k+1$, whenever $w\in \bswords X_n$ has the property that $d_{n-k} w, \dots, d_n w \in \bous_{n-1}(X_{n-1})$, then $w\in \bous_n(X_n)$.
The degree of $X$ is the least positive integer $k$ such that $X$ has degree at most $k$.
\end{definition}

See \cite{HackneyLynd:HSSPG} for the definition of degree for more general symmetric sets,\footnote{For spiny symmetric sets, Definition 3.18 of \cite{HackneyLynd:HSSPG} agrees with \cref{def degree}; this is a minor variation of \cite[Proposition 4.10]{HackneyLynd:HSSPG} as symmetries allow us to test only on the subset $\{n-k, \dots, n\}$.} which is based on an approach to the higher Segal conditions for simplicial objects from \cite{Walde:HSSHE}.
We note that a symmetric set has degree 1 if and only if it is isomorphic to the nerve of a groupoid.

For $w\in \bswords X_n$, we write $\overline w \in \bswords X_m$ for the word where we have deleted all identities and kept only the first instance of any non-identity $f_i$ appearing (if $w$ consists entirely of $\id_a$, then $\overline w$ is simply $a\in X_0 = \bswords X_0$).
There is a surjection $\sigma \colon [n] \twoheadrightarrow [m]$ such that $w = \sigma^* \overline w$, and by choosing a zero-preserving section $\delta \colon [m] \hookrightarrow [n]$ we have $\overline w = \delta^* w$.
We conclude that $w\in \bous_n(X_n)$ if and only if $\overline w \in \bous_m(X_m)$.
By our characterization of nondegeneracy, if $\overline w = \bous_m(x)$ then $x\in X_m$ is nondegenerate.
We call general elements of $\bswords X$ which do not contain identities or duplicates \emph{nondegenerate}.

\begin{proposition}\label{prop degree nondeg}
Suppose $X$ is a spiny symmetric set and $k\geq 1$.
Then $X$ has degree at most $k$ if for each $n\geq k+1$ and each \emph{nondegenerate} word $w\in \bswords X_n$ having $d_{n-k} w, \dots, d_n w \in \bous_{n-1}(X_{n-1})$, we also have $w\in \bous_n(X_n)$.
\end{proposition}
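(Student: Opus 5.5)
We must show that testing nondegenerate words suffices. Take an arbitrary $w\in\bswords X_n$ with $n\geq k+1$ and $d_{n-k}w,\dots,d_nw\in\bous_{n-1}(X_{n-1})$; we want $w\in\bous_n(X_n)$. The plan is to argue by induction on $n$, and to reduce to the nondegenerate case via the reduced word $\overline w=\delta^*w\in\bswords X_m$. By the discussion preceding the proposition, $w\in\bous_n(X_n)$ if and only if $\overline w\in\bous_m(X_m)$. So it suffices to treat $w$ degenerate, i.e.\ containing an identity or a repeated entry, and to show $w\in\bous_n(X_n)$ directly.

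\textbf{Case 1: some entry is an identity or duplicates an earlier entry, at a position $j$ with $n-k\le j\le n$.} Suppose $f_j$ is an identity or equals some $f_i$ with $i<j$. Then $w=s^*(d_jw)$, where $s$ is the zero-preserving surjection $[n]\to[n-1]$ that collapses $j$ onto $0$ or onto $i$, appropriately re-indexed. Since $d_jw\in\bous_{n-1}(X_{n-1})$ by hypothesis and $\bous$ is a map of presheaves, we get $w\in\bous_n(X_n)$.

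\textbf{Case 2: every degeneracy occurs among the first $n-k-1$ positions.} Here some $f_j$ with $j<n-k$ is an identity or a duplicate of an earlier entry. Apply a zero-fixing permutation of $\{1,\dots,n\}$ that moves position $j$, and its duplicate partner if any, into the window $\{n-k,\dots,n\}$, while keeping the order of the remaining entries. The difficulty is that the hypotheses on the faces $d_{n-k}w,\dots,d_nw$ do not obviously survive this permutation, because the window now contains different letters. This is the step I expect to be the main obstacle.

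To get around it I would first try a direct argument instead of permuting. Pick any $i$ in the window, so $d_iw\in\bous_{n-1}(X_{n-1})$ and $d_iw$ still contains the degeneracy at $j$ (since $j<n-k\le i$). Write $d_iw=\bous(y)$ with $y\in X_{n-1}$; then $y$ is degenerate, namely $y=s^*y'$ where $y'$ is the face obtained by deleting position $j$. Therefore $d_jw=\bous_{n-1}$ of a simplex, obtained by re-inserting the letter $f_i$ into $y'$, and this is exactly what needs to be justified. The cleanest route is to apply the inductive hypothesis in dimension $n-1$ to $d_jw$. Its top $k+1$ faces $d_{n-1-k}(d_jw),\dots,d_{n-1}(d_jw)$ are, up to reindexing, $d_j$ applied to $d_{n-k}w,\dots,d_nw$. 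Each of these lies in $\bous(X)$, since it is a face of an element of $\bous(X)$.

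If $n-1\geq k+1$, induction gives $d_jw\in\bous_{n-1}(X_{n-1})$. In the boundary case $n-1=k$, the simplex $d_jw$ itself equals $d_j$ of an element of $\bous(X)$ when $j$ lies in the window, which is the only possibility when $n=k+1$. Once $d_jw\in\bous_{n-1}(X_{n-1})$, the degeneracy argument of Case 1 gives $w=s^*(d_jw)\in\bous_n(X_n)$.

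The induction should be organized as follows: the statement ``every $w$ satisfying the window hypothesis lies in $\bous(X)$'' is proved for all $w$ of length $n$, assuming it for length $n-1$. Nondegenerate $w$ are covered by the hypothesis of the proposition, and degenerate $w$ by Cases 1 and 2.
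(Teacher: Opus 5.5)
Your argument is correct, but it handles the degenerate words differently from the paper.

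Your Case 1 matches the paper's first step. A degeneracy at a position $j$ in the window $\{n-k,\dots,n\}$ gives $w=s^*(d_jw)$ with $d_jw\in\bous_{n-1}(X_{n-1})$.

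For the remaining degenerate words the paper does not induct. It passes at once to the reduced word $\overline w\in\bswords X_m$. Because the window entries are distinct non-identities occurring only once in $w$, it observes three things:
\begin{itemize}
\item $m\geq k+1$;
\item $\overline w$ has the same last $k+1$ entries as $w$;
\item $d_i\overline w=\overline{d_{i-m+n}w}$ for $m-k\le i\le m$.
\end{itemize}
So $\overline w$ is a nondegenerate word meeting the hypothesis. Hence $\overline w\in\bous_m(X_m)$, and therefore $w\in\bous_n(X_n)$.

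You instead remove one degeneracy at a time. For a degenerate position $j<n-k$, the simplicial identity $d_{i-1}d_j=d_jd_i$ (valid for $j<i$) shows that the window faces of $d_jw$ are $d_j$ applied to the window faces of $w$. These lie in $\bous(X)$ by naturality. Induction on $n$ then gives $d_jw\in\bous_{n-1}(X_{n-1})$, and $w=s^*(d_jw)$ finishes the step.

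What each route buys:
\begin{itemize}
\item Yours uses only naturality of $\bous$ and simplicial identities. It avoids the bookkeeping about $\overline w$: the equivalence $w\in\bous$ if and only if $\overline w\in\bous$, and the face formula for $\overline w$.
\item The paper's is non-inductive. It makes explicit that the nondegenerate words one actually needs to test have the same tail as $w$.
\end{itemize}

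In the write-up, cut the abandoned detours: the permutation idea, the ``re-inserting $f_i$'' paragraph, and the unused opening appeal to $\overline w$. Also state the base case plainly rather than via the garbled sentence about $d_jw$. When $n=k+1$ the window is all of $\{1,\dots,n\}$, so Case 2 is vacuous and the nondegenerate hypothesis together with Case 1 suffices.
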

\begin{proof}
One direction of the equivalence is clear.
For the reverse, assume the condition from the proposition and suppose $w = (f_1, \dots, f_n) \in \bswords X_n$ has $d_{n-k} w, \dots, d_n w \in \bous_{n-1}(X_{n-1})$.
If for some $n-k \leq i \leq n$ the element $f_i$ appears elsewhere in $w$, or is an identity, then $w$ is degenerate on\footnote{More precisely, there exists a zero-preserving surjection $\sigma \colon [n] \to [n-1]$ with $w = \sigma^*d_i w$.} $d_iw$, hence is in $\bous_n(X_n)$.
Assuming this is not the case, then the reduced version $\overline w = (g_1, \dots, g_m)$ has the exact same tail: $g_{m-k} = f_{n-k}, \dots, g_m = f_n$.
Further, $d_i \overline w = \overline{d_{i-m+n} w}$ for $m-k \leq i \leq m$.
As $d_{i-m+n} w \in \bous_{n-1}(X_{n-1})$, we have $d_i \overline w = \overline{d_{i-m+n} w} \in \bous_{m-1}(X_{m-1})$, so by assumption $\overline w \in \bous_m(X_m)$.
This implies $w\in \bous_n(X_n)$.
\end{proof}

The upshot is that computing degree for a finite partial group is a finite process, and is one that works well with our existing implementation of finite partial groups (\cref{rmk implementation}).
Let us explain the basic algorithm.

For $n\geq k+1$, start with some nondegenerate element $x \in X_{n-1} \subseteq \bswords X_{n-1}$, which we write as $x = (a_1, \dots, a_{n-1})$.
We then consider all extensions of this word to $w = (a_1, \dots, a_{n-1}, a_n)$ where $a_n \in X_1 \setminus \{a_1, \dots, a_{n-1}, \id \}$.
We then look to see whether the faces $d_{n-k}w, \dots, d_{n-1}w$ are in $X_{n-1}$ (we do not need to check whether $d_n w = x$ is in $X_{n-1}$), and if they are, whether or not $w \in X_n$.
If some face is not in $X_{n-1}$, or if all faces are in $X_{n-1}$ and $w\in X_n$, then we continue the loop with the next element of $X_{n-1}$, otherwise we stop since we've established that $X$ does not satisfy the condition.
After processing all elements of $X_{n-1}$, we increase $n$ and continue.
This terminates since we are only working with nondegenerate elements.

\begin{remark}
There is a small optimization that we include: we only process those $x\in X_{n-1}$ such that the substrings $(a_1, \dots, a_{n-k-1})$ and $(a_{n-k}, \dots, a_{n-1})$ of $x$ are \emph{ordered}.
This is acceptable since the process from the previous paragraph is insensitive to the action by $\Sigma_{n-k-1} \times \Sigma_k$.
For the corpus of partial groups of order 9, this amounts to a 2.82-fold speedup (and will be more significant for larger partial groups).
\end{remark}

\subsection{Degree 2 partial groups are 2-coskeletal}\label{deg 2 partial groups}
This section records a result found during our experiments, which says that if a spiny symmetric set has degree at most 2, then it is equal to its 2-coskeleton. 
What does this mean?
The $n$-skeleton functor $\sk_n \colon \sym \to \sym$ admits a right adjoint, called the \emph{$n$-coskeleton} and denoted by $\cosk_n$. 
If $\iota \colon \fin_{\leq n} \to \fin$ is the inclusion from \cref{skeleta and dimension}, then $\cosk_n$ may be defined as $\iota_* \iota^*$, where $\iota^*$ is the restriction to $\fin_{\leq n}$, and $\iota_*$ is right Kan extension along $\iota^\op$.
There's no immediate formal reason why $\cosk_n$ preserves spininess, and is in fact not true when $n=1$.\footnote{For $X$ reduced, one sees $(\cosk_1 X)_2 = X_1^{\times 3}$ so $\cosk_1 X$ is spiny only when $X_1$ is a singleton.}
It will be helpful to have an alternate description of the coskeleton.

\begin{lemma}\label{lem n-coskel}
If $X$ is a spiny symmetric set and $n\geq 2$, then $\cosk_n X$ is isomorphic to the symmetric subset of $\mat(X_1)$ having $m$-simplices
\[
  \{ M \mid \alpha^*M \in \mu_X(X_n) \text{ for all } \alpha \colon [n] \to [m] \} \subseteq \mat(X_1)_m.
\]
In particular, $\cosk_n X$ is spiny.
\end{lemma}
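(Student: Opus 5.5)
The plan is to compute the right Kan extension pointwise and then identify each $m$-simplex of $\cosk_n X$ with a matrix. By the pointwise formula, $(\cosk_n X)_m = \lim_{([k],\alpha\colon [k]\to[m]),\,k\leq n} X_k$. Equivalently, it is the set $\sym(\sk_n \Delta^m, X)$ of compatible families $(x_\alpha)$, with $x_\alpha \in X_k$ for each $\alpha\colon[k]\to[m]$ and $k\leq n$, satisfying $\beta^* x_\alpha = x_{\alpha\beta}$. (Here $\Delta^m = \fin(-,[m])$ is the representable.)

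To such a family we assign the matrix $M=(x_{\epsilon^m_{ij}})_{ij}\in\mat(X_1)_m$. This assignment is natural in $[m]$: for $\gamma\colon[m']\to[m]$ we have $\epsilon^m_{\gamma(a)\gamma(b)}=\gamma\epsilon^{m'}_{ab}$. So it defines a map of symmetric sets $\cosk_n X\to\mat(X_1)$.

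Next we check that the image lies in the stated subset. For $\alpha\colon[n]\to[m]$, compatibility gives $\mu_X(x_\alpha)=(x_{\alpha\epsilon_{ab}})=\alpha^*M$. Hence $\alpha^*M\in\mu_X(X_n)$.

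We then show the map is injective. Every $\alpha\colon[k]\to[m]$ with $k\leq n$ factors through some $[n]$, since $n\geq 1$ allows padding by repeating a value. Moreover, $x_\alpha$ is determined by $\mu_X(x_\alpha)=\alpha^*M$, because $\mu_X$ is a monomorphism for spiny $X$. So the family is recovered from $M$.

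For surjectivity onto the subset, take $M$ with all $\alpha^*M\in\mu_X(X_n)$. For $\alpha\colon[k]\to[m]$ with $k\leq n$, choose a surjection $\sigma\colon[n]\to[k]$ and a section $\delta$ of it, so $\sigma\delta=\id_{[k]}$. Set
\[x_\alpha=\delta^*\mu_X^{-1}\bigl((\alpha\sigma)^*M\bigr).\]
Then $\mu_X(x_\alpha)=\delta^*\sigma^*\alpha^*M=\alpha^*M$. This shows $\alpha^*M$ lies in $\mu_X(X_k)$, so $x_\alpha=\mu_X^{-1}(\alpha^*M)$ is independent of the choices made. Compatibility $\beta^*x_\alpha=x_{\alpha\beta}$ then follows from the injectivity of $\mu_X$ and naturality of $\mu_X$. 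This yields the claimed isomorphism.

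For spininess, the plan is to apply \cref{spininess for mat subobjects}. Let $Y$ be the subset just described. Its $1$-simplices are the $2\times 2$ matrices $\begin{bsmallmatrix}\id & f\\ f^\dagger & \id\end{bsmallmatrix}$ for $f\in X_1$, because $n\geq 1$ ensures each $2\times 2$ restriction lies in $\mu_X(X_n)$, and these are in bijection with $X_1$. So $\mu_Y$ agrees with the inclusion $Y\subseteq\mat(X_1)$ under this identification, and it is injective.

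It remains to check that $\bous_2$ is injective on $Y_2$. Since $n\geq 2$, the identity of $[2]$ factors through $[n]$. So $Y_2=\mu_X(X_2)$ via the surjectivity argument above. On this set, $\bous_2$ is the Bousfield–Segal map of $X$, which is injective because $X$ is spiny. This is exactly where $n\geq 2$ is needed, consistent with the failure at $n=1$.

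The main obstacle I expect is the well-definedness step: showing that $x_\alpha$ does not depend on the chosen factorization through $[n]$. The fix is to route everything through the monomorphism $\mu_X$, and I would check carefully that $\alpha^*M$ lands in $\mu_X(X_k)$ and not only in $\mu_X(X_n)$.
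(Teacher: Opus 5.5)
Your proof is correct, but it is organized differently from the paper's. The paper never computes $\cosk_n X$ levelwise. It checks the adjunction directly: it sends $f\colon \sk_n Z\to X$ to $\mat(f_1)\circ\mu_Z\colon Z\to Y$, and sends a map $Z\to Y$ to the composite $\sk_n Z\to\sk_n Y\cong\sk_n X\hookrightarrow X$. These are mutually inverse because maps into spiny symmetric sets are determined on vertices and edges, and Yoneda then gives $Y\cong\cosk_n X$.

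You instead use the pointwise formula $(\cosk_n X)_m=\hom(\sk_n\Delta^m,X)$. In effect you run the paper's argument only for representable $Z=\Delta^m$, and you replace the ``determined on edges'' principle by the injectivity of $\mu_X$. Your version is more explicit. The factorization $\alpha=(\alpha\sigma)\delta$ proves $Y_k=\mu_X(X_k)$ for all $k\leq n$, which is exactly the content behind the paper's one-line remark that $\sk_n Y\cong\sk_n X$. It also makes the independence of choices transparent. The paper's version is shorter and needs no description of right Kan extensions.

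The spininess step is the same in both: identify $Y_1$ with $X_1$, and use $Y_2=\mu_X(X_2)$ (this is where $n\geq 2$ enters) to apply \cref{spininess for mat subobjects}.

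One check you leave implicit is that the family built from $M$ maps back to $M$, i.e.\ $x_{\epsilon_{ij}}=M_{ij}$. This holds because the $(0,1)$ entry of $\mu_X(e)$ is $e$ for every $e\in X_1$.
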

\begin{proof}
Write $Y$ for the indicated symmetric subset of $\mat(X_1)$, and notice $\sk_n Y \cong \sk_n X$.
Since $n\geq 2$, \cref{spininess for mat subobjects} (identifying $Y_1$ with $X_1$) implies $Y$ is spiny.
Suppose $Z$ is an arbitrary symmetric set and $f \colon \sk_n Z \to X$ is a map.
The outer pentagon of the following commutes.
\[ \begin{tikzcd}[column sep=tiny, row sep=tiny]
& \sk_n Z  \drar{f} \dlar[hook] \\
Z\ar[dd,"\mu_Z"'] \drar[dashed] & & X\ar[dd,hook,"\mu_X"] \dlar[hook] \\
& Y \drar[hook] \\
\mat(Z_1) \ar[rr,"\mat(f_1)"'] & & \mat(X_1) 
\end{tikzcd} \]
For each $z\in Z_m$, the element $(\mat(f_1) \circ \mu_Z)(z)$ is in $Y_m$ since if $\alpha \colon [n] \to [m]$ is a map then $\alpha^*(\mat(f_1) \circ \mu_Z)(z) = (\mat(f_1) \circ \mu_Z)(\alpha^*z) = (\mu_X \circ f)(\alpha^*z) \in X_n$.
This defines a function $\hom(\sk_n Z, X) \to \hom(Z,Y)$.
Conversely given $Z\to Y$ we have $\sk_n Z \to \sk_n Y \cong \sk_n X \hookrightarrow X$. 
As maps into spiny symmetric sets are determined by what happens on vertices and edges, we have a natural isomorphism $\hom(\sk_n Z, X) \cong \hom(Z,Y)$.
Thus $Y \cong \cosk_n X$.
\end{proof}

\begin{remark}\label{B construction}
If $P$ is a BPG, then the partial group $BP$ is 2-coskeletal.
Moreover, if $X$ is a 2-coskeletal partial group with underlying BPG $P$, then $X\cong BP$.
\end{remark}

\begin{theorem}
Suppose $X$ is a spiny symmetric set and $k\geq 2$.
If $X$ has degree at most $k$, then $X$ is $k$-coskeletal.
\end{theorem}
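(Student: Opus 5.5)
By \cref{lem n-coskel}, since $k\geq 2$ we may identify $\cosk_k X$ with the symmetric subset $Y\subseteq \mat(X_1)$ consisting of matrices all of whose $k$-dimensional restrictions $\alpha^*M$ lie in $\mu_X(X_k)$. The unit $X\to \cosk_k X$ is then the inclusion $\mu_X(X)\subseteq Y$, which is injective because $X$ is spiny. So it suffices to prove surjectivity: every $M\in Y_m$ lies in $\mu_X(X_m)$. We argue by induction on $m$. For $m\leq k$ this holds by definition, taking $\alpha=\id$ or a surjection $[k]\to[m]$ (the latter reduces to the former using that $\mu_X$ is injective and natural).

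For the inductive step, let $m\geq k+1$ and $M=(a_{ij})\in Y_m$, and set $w=\bous_m$-row of $M$, i.e.\ $w=(a_{01},\dots,a_{0m})\in \bswords X_m$. Every face $d_iM$ lies in $Y_{m-1}$, since $Y$ is a symmetric subset. By induction each face is $\mu_X(x_i)$ for some $x_i\in X_{m-1}$, and the top row of $d_iM$ is $d_iw$ for $1\leq i\leq m$. Hence $d_{m-k}w,\dots,d_mw\in \bous_{m-1}(X_{m-1})$, and the degree hypothesis (\cref{def degree}) gives $w=\bous_m(x)$ for some $x\in X_m$.

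It remains to check that $\mu_X(x)=M$, not merely that the top rows agree. Both are elements of $Y_m$, since $\mu_X(X)\subseteq Y$. For $1\leq i<j\leq m$, let $\alpha\colon[2]\to[m]$ send $0,1,2\mapsto 0,i,j$. Then $\alpha^*M$ and $\alpha^*\mu_X(x)$ both lie in $\mu_X(X_2)$, because every element of $Y_2$ does (here we use $k\geq 2$). They have the same $\bous_2$-image $(a_{0i},a_{0j})$. Since $\bous_2$ is injective on $X$, they coincide, so their $(1,2)$ entries agree: the $(i,j)$ entry of $\mu_X(x)$ equals $a_{ij}$. Diagonal entries are identities in both matrices, and the lower triangle follows by skew-symmetry. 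Thus $M=\mu_X(x)$, completing the induction.

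The main point requiring care is the inductive application of the degree condition. The faces $d_iw$ of the starry word must correspond to the top rows of faces of $M$, which requires $i\geq 1$; this holds since we only use $i\in\{m-k,\dots,m\}$ with $m-k\geq 1$. This is exactly where $m\geq k+1$ enters. The final reconstruction of the full matrix from its top row is essentially the argument in \cref{spininess for mat subobjects}.
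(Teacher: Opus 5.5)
Your proof is correct and follows the same route as the paper: you model $\cosk_k X$ as the matrix subobject $Y$ of \cref{lem n-coskel}, induct on dimension, and apply the degree condition to the top row of $M\in Y_m$, using that $d_{m-k}M,\dots,d_mM$ lie in $Y_{m-1}=X_{m-1}$. Your extra verification that $\mu_X(x)=M$ via injectivity of $\bous_2$ spells out a step the paper leaves implicit; it also follows at once from the spininess of $Y$, which \cref{lem n-coskel} already provides.
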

\begin{proof}
Identify $X$ with a symmetric subset of $\mat(X_1)$, and let $\cosk_k X = Y \subseteq \mat(X_1)$ be the model for the $k$-coskeleton from \cref{lem n-coskel}.
As we always have $X\subseteq Y$, we inductively show the reverse inclusion.
We have $X_m = Y_m$ for $m \leq k$.
Suppose $n \geq k+1$ and $X_m = Y_m$ for $m < n$.
If $M\in Y_n$, then $d_{n-k}M, \dots, d_nM$ are all in $Y_{n-1} = X_{n-1}$.
Since $X$ has degree at most $k$, we have $M\in X_n$.
\end{proof}

\begin{corollary}\label{thm degree 2}
A spiny symmetric set of degree at most 2 is 2-coskeletal. \qed
\end{corollary}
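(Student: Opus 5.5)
This is the case $k=2$ of the preceding theorem, so the plan is simply to specialize that argument and check that nothing in it needs $k>2$. The standing hypothesis $n\geq 2$ in \cref{lem n-coskel} is satisfied at $n=2$. So we may model $\cosk_2 X$ concretely as the symmetric subset $Y\subseteq \mat(X_1)$ consisting of those matrices all of whose $\alpha^*$-pullbacks along maps $\alpha\colon[2]\to[m]$ lie in $\mu_X(X_2)$. Since $X$ is spiny, $\mu_X$ is a monomorphism, so we identify $X$ with its image. The unit map $X\to\cosk_2 X$ then becomes the inclusion $X\subseteq Y$, and it suffices to prove $Y_m\subseteq X_m$ for all $m$.

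We argue by induction on $m$. For $m\leq 2$ we have $X_m=Y_m$ directly from the description of $Y$: each $M\in Y_m$ is $\id^*M$, or a degeneracy of some $2$-simplex pullback, so it lies in $X_m$. Now let $n\geq 3$, assume $X_m=Y_m$ for $m<n$, and take $M\in Y_n$. Since $Y$ is a symmetric subset, the faces $d_{n-2}M$, $d_{n-1}M$ and $d_nM$ lie in $Y_{n-1}=X_{n-1}$.

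Next we translate to starry words. Let $w$ be the tail of the top row of $M$. The matrix $M$ is skew-symmetric with identities on the diagonal, because its $2$-dimensional restrictions are. Hence $w\in\bswords X_n$, and $d_iw=\bous_{n-1}(d_iM)\in\bous_{n-1}(X_{n-1})$ for $i=n-2,n-1,n$. Applying \cref{def degree} with $k=2$ gives $w=\bous_n(x)$ for some $x\in X_n$.

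It remains to check $\mu_X(x)=M$, and this is the one point needing care. Both $\mu_X(x)$ and $M$ lie in $Y_n$ and have the same top row. Every entry $a_{ij}$ is recovered from the $2$-simplex restriction along $0,1,2\mapsto 0,i,j$. By injectivity of $\bous_2$ on $X_2=Y_2$, that restriction is determined by the pair $(a_{0i},a_{0j})$. Therefore the two matrices agree entrywise, which is exactly the argument of \cref{spininess for mat subobjects} applied to $Y$. This gives $M\in X_n$ and closes the induction, so $X=\cosk_2X$.
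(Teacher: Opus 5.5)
Your proposal is correct and is the paper's own argument: the corollary is the $k=2$ case of the preceding theorem, proved by modeling $\cosk_2 X$ as in \cref{lem n-coskel} and inducting on dimension using the faces $d_{n-2}M$, $d_{n-1}M$, $d_nM$. The only difference is that you spell out the step the paper compresses into ``since $X$ has degree at most $k$, we have $M\in X_n$'' (you pass to the top-row word and then recover $M=\mu_X(x)$ via injectivity of $\bous_2$), and that step is exactly the spininess of $Y$ established in \cref{lem n-coskel}.
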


\begin{remark}
One may compare this with results on coskeletality for 2-Segal simplicial sets, see \cite[\S4.1]{Stern:P2SC} and \cite[Corollary 1.7]{BOORS:2SSWC}.
\end{remark}

\begin{example}
The converse to \cref{thm degree 2} does not hold.
Among partial groups, there are seven counterexamples of order 6, and no counterexamples of smaller order.
The simplest counterexample is 2-dimensional, a BPG $P$ with $\sk_2 BP = BP$ where $P$ has underlying involutive set $\{0,\pm 1, \pm 2, 3\}$ and the following partial Cayley table.
\[
\begin{array}{r|rrrrr}
    & -2 &-1 & 1 & 2 & 3 \\
\hline
 -2 & -1 & 3 & 2 & 0 & ? \\
 -1 &  ? & ? & 0 &-2 & 2 \\
  1 &  2 & 0 & ? & 3 & ? \\
  2 &  0 &-2 & ? & 1 &-1 \\
  3 &  1 & ? &-2 & ? & 0 
\end{array}
\]
We have $(2 \cdot 2) \cdot 2 = 1 \cdot 2 = 3$, but $2 \cdot (2 \cdot 2) = 2 \cdot 1$ is not defined; thus $\deg BP > 2$ by \cite[Lemma 19]{HackneyLyndSalati:EPG}.
\end{example}

\appendix

\section{Indecomposable partial groups of order at most 5}\label{sec tiny order}

In this appendix we explicitly list all indecomposable partial groups of very small order, including the groups.
Recall that a partial group $X$ is indecomposable just when $X$ is nontrivial and if $X \cong Y \vee Z$, then $Y$ or $Z$ is trivial.
Completeness of this list follows from the algorithm in \cref{sec algorithm}.
We provide concrete descriptions, truncated partial Cayley tables (omitting multiplication by 0), sets of maximal subgroups (for non-groups), and minimal sets of generators (with orbit representatives chosen minimal with respect to column major order).
We also include the \emph{degree} (\cref{sec degree}) in \cref{table size at most 4} (see \cref{rmk degree calc} for details).

\begin{table}
\caption{Indecomposable partial groups of order at most 5}
\label{table size at most 4}
\begin{tabular}{c c c c c c r r r r}
\toprule
(free,fixed) & partial group & dim & gens & deg \\ 
\midrule
(0,1) & $BC_2$ & 1 & 1 &  1\\ 
(1,0) & $\sk_1(BC_3)$ & 1 & 1 & 2 \\ 
(1,0) & $BC_3$ & 2 & 1 &  1\\ 
(0,3) & $\sk_2(BV_4)$ & 2 & 1 & 3 \\ 
(0,3) & $BV_4$ & 3 & 1 &  1\\ 
(1,1) & $\sk_2(BC_4)$ & 2 & 1 &  3\\ 
(1,1) & $BC_4$ & 3 & 1 &  1\\ 
(1,2) & $BP_1$ & 2 & 1 & 2 \\ 
(1,2) & $BP_2$ & 2 & 2 & 2 \\ 
(2,0) & $\sk_2(BC_5)$ & 2 & 2 & 3 \\ 
(2,0) & $\sk_3(BC_5)$ & 3 & 1 & 4 \\ 
(2,0) & $BC_5$ & 4 & 1 &  1\\ 
(2,0) & $BP_3$ & 2 & 1 & 2 \\ 
(2,0) & $BP_4$ & 2 & 2 & 2 \\ 
\bottomrule
\end{tabular}
\end{table}

For order 2, there is only the cyclic group $BC_2$, which has involutive set $\{0,1\}$ and dimension 1. 
This has the Cayley table and generating matrix given below.
\[
\begin{array}{r|r}
  & 1 \\
  \hline
  1 & 0
\end{array}
\qquad
\begin{bmatrix}
  0 & 1 \\
  1 & 0
\end{bmatrix}
\]

At order 3, there are two indecomposable (binary) partial groups, of dimension 1 and 2, respectively: the free partial group $\mathfrak{F}^1$ on one generator and the cyclic group $BC_3$. 
Both have the same underlying involutive set $\{0,\pm1\}$, and we notice $\sk_1(BC_3) = \mathfrak{F}^1$.
The Cayley tables and generators are
\[
\begin{array}{r|rr}
  & -1 & 1 \\
  \hline
  -1 & ? & 0 \\
  1 & 0 & ?
\end{array}
\qquad
\begin{bmatrix}
  0 & 1 \\
  -1 & 0
\end{bmatrix}
\qquad
\begin{array}{r|rr}
  & -1 & 1 \\
  \hline
  -1 & 1 & 0 \\
  1 & 0 & -1
\end{array}
\qquad
\begin{bmatrix}
  0 & 1 & -1 \\
  -1 & 0 & 1 \\
  1 & -1 & 0
\end{bmatrix}
\]
and $\mathfrak{F}^1$ has only one subgroup (the trivial group).

There are exactly four indecomposable partial groups of order 4: the Klein 4-group $BV_4$, the cyclic group $BC_4$, and their 2-skeleta.
Each of these is generated by a single matrix.
For the first two in Table~\ref{table size at most 4}, we have involutive set $\{0,1,2,3\}$, and the following Cayley table and generators:
\[
\begin{array}{r|rrr}
  & 1 & 2 & 3 \\
  \hline
  1 & 0 & 3 & 2 \\
  2 & 3 & 0 & 1 \\
  3 & 2 & 1 & 0
\end{array}
\qquad
\begin{bmatrix}
  0 & 1 & 2 \\
  1 & 0 & 3 \\
  2 & 3 & 0
\end{bmatrix} \in \sk_2(BV_4) 
\qquad
\begin{bmatrix}
  0 & 1 & 2 & 3 \\
  1 & 0 & 3 & 2 \\
  2 & 3 & 0 & 1 \\
  3 & 2 & 1 & 0
\end{bmatrix}
\in BV_4.
\]
The non-group $\sk_2(BV_4)$ has 3 maximal subgroups, each a copy of $C_2$.
The next two entries in the table have involutive set $\{0,\pm1, 2\}$ and the following Cayley table and generators:
\[
\begin{array}{r|rrr}
  & -1 & 1 & 2 \\
  \hline
  -1 & 2 & 0 & 1 \\
  1 & 0 & 2 & -1 \\
  2 & 1 & -1 & 0
\end{array}
\quad
\begin{bmatrix}
  0 & 1 & -1 \\
  -1 & 0 & 2 \\
  1 & 2 & 0
\end{bmatrix}
\in \sk_2(BC_4)
\quad
\begin{bmatrix}
  0 & 1 & -1 & 2 \\
  -1 & 0 & 2 & 1 \\
  1 & 2 & 0 & -1 \\
  2 & -1 & 1 & 0
\end{bmatrix}
\in BC_4.
\]
The non-group $\sk_2(BC_4)$ has one maximal subgroup: $\{0,2\} \cong C_2$.

Turning to order 5, we see $BC_5$ and its skeleta, along with four other indecomposable partial groups, each associated to a BPG labeled $P_1$, $P_2$, $P_3$, $P_4$.

The partial group $BP_1$ (with $P_1 \cong A^{1,2}$ from \cref{ss abpgs}) has underlying involutive set $\{0,\pm1, 2, 3\}$, Cayley table and generating matrix
\[
\begin{array}{r|rrrr}
  & -1 & 1 & 2 & 3 \\
  \hline
  -1 & ? & 0 & 3 & ? \\
  1 & 0 & ? & ? & 2 \\
  2 & ? & 3 & 0 & 1 \\
  3 & 2 & ? & -1 & 0
\end{array}
\qquad
\begin{bmatrix}
  0 & 1 & 3 \\
  -1 & 0 & 2 \\
  3 & 2 & 0
\end{bmatrix},
\]
and two maximal subgroups $\{0,2\}$ and $\{0,3\}$.
Meanwhile, $BP_2$ has the same underlying involutive set, Cayley table and generating matrices
\[
\begin{array}{r|rrrr}
  & -1 & 1 & 2 & 3 \\
  \hline
  -1 & 1 & 0 & 3 & ? \\
  1 & 0 & -1 & ? & 2 \\
  2 & ? & 3 & 0 & 1 \\
  3 & 2 & ? & -1 & 0
\end{array}
\qquad
\begin{bmatrix}
  0 & 1 & -1 \\
  -1 & 0 & 1 \\
  1 & -1 & 0
\end{bmatrix},
\begin{bmatrix}
  0 & 1 & 3 \\
  -1 & 0 & 2 \\
  3 & 2 & 0
\end{bmatrix}
\]
and three maximal subgroups: $\{0,\pm 1\} \cong C_3$, $\{0,2\}$, and $\{0,3\}$.

Turning to skeleta of $BC_5$: we have Cayley table
\[
\begin{array}{r|rrrr}
  & -2 & -1 & 1 & 2 \\
  \hline
  -2 & -1 & 1 & 2 & 0 \\
  -1 & 1 & 2 & 0 & -2 \\
  1 & 2 & 0 & -2 & -1 \\
  2 & 0 & -2 & -1 & 1
\end{array}
\] on the underlying involutive set $\{0,\pm1,\pm2\}$. 
The generators for $\sk_2(BC_5)$ are
\[
\begin{bmatrix}
  0 & 2 & 1 \\
  -2 & 0 & 2 \\
  -1 & -2 & 0
\end{bmatrix},
\begin{bmatrix}
  0 & 2 & -1 \\
  -2 & 0 & 1 \\
  1 & -1 & 0
\end{bmatrix}
\]
and this BPG has no subgroup other than the trivial group.
The next partial group is $\sk_3(BC_5)$ which has a single generator
\[
\begin{bmatrix}
0 & 2 & 1 & -1 \\
-2 & 0 & 2 & 1 \\
-1 & -2 & 0 & 2 \\
1 & -1 & -2 & 0
\end{bmatrix},
\]
and no subgroup other than the trivial group.
We omit discussion of $BC_5.$

The partial group $BP_3$ (with $P_3 \cong A^{2,0}$ from \cref{ss abpgs}) has underlying involutive set $\{0,\pm1, \pm2\}$, Cayley table and generating matrix
\[
\begin{array}{r|rrrr}
  & -2 & -1 & 1 & 2 \\
  \hline
  -2 & -1 & ? & 2 & 0 \\
  -1 & ? & ? & 0 & -2 \\
  1 & 2 & 0 & ? & ? \\
  2 & 0 & -2 & ? & 1
\end{array}
\qquad
\begin{bmatrix}
  0 & 2 & 1 \\
  -2 & 0 & 2 \\
  -1 & -2 & 0
\end{bmatrix}
\]
and no nontrivial subgroup.
The partial group $BP_4$ has the same underlying involutive set, Cayley table and generating matrices
\[
\begin{array}{r|rrrr}
  & -2 & -1 & 1 & 2 \\
  \hline
  -2 & -1 & ? & 2 & 0 \\
  -1 & ? & 1 & 0 & -2 \\
  1 & 2 & 0 & -1 & ? \\
  2 & 0 & -2 & ? & 1
\end{array}
\qquad
\begin{bmatrix}
  0 & 2 & 1 \\
  -2 & 0 & 2 \\
  -1 & -2 & 0
\end{bmatrix},
\begin{bmatrix}
  0 & 1 & -1 \\
  -1 & 0 & 1 \\
  1 & -1 & 0
\end{bmatrix}
\]
and one maximal subgroup $\{0,\pm 1\} \cong C_3$.

\begin{remark}[Degree Calculations]\label{rmk degree calc}
The degree column in \cref{table size at most 4} was computed via the algorithm described in \cref{sec degree}, but one may reason about several of these entries as follows.
First, a partial group is the nerve of a group if and only if it has degree 1.
For the skeleta of groups, the degree follows from \cref{lem skeleton dimension} (coupled with \cref{hm thm} for $\dim(BG)$).
The only real calculation is for the four BPGs $P_i$ which can be shown to have degree at most 2 using \cref{prop degree nondeg}: in each case, there are no length 3 nondegenerate starry words $w$ with $d_1w, d_2w, d_3w\in BP_i$.
\end{remark}

\section{Partial groups of dimension two less than order}\label{sec appendix dim and order}

Recall from \cref{hm thm} that a partial group $X$ of order $n+1$ is a group if and only $\dim(X) = n$.
We establish the following related result, which is also true for order $\leq 5$ by the explicit list in \cref{sec tiny order}.

\begin{theorem}\label{dim one less size}
Suppose that $X$ is an indecomposable partial group of order $n+1 \geq 6$ and $\dim(X) = n-1$.
Then $X = \sk_{n-1} BG$ for a (unique) group $G$.
\end{theorem}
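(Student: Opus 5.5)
The plan is to turn a top-dimensional nondegenerate simplex of $X$ into the nerve of a group, and then to show that $X$ cannot contain anything beyond its $(n-1)$-skeleton.

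\emph{Setup.} Write $I = X_1\setminus\{0\}$, so $|I| = n$. Since $\dim(X) = n-1$, there is a nondegenerate $x\in X_{n-1}$. Let $M = \mu_X(x) = (a_{ij})_{0\le i,j\le n-1}$. By the characterization of nondegeneracy in \S\ref{skeleta and dimension}, each row of $M$ consists of $n$ distinct elements of $X_1$, exactly one of which is the identity. Each row therefore omits exactly one element $m_i\in I$. By skew-symmetry (condition \eqref{suitable skew}), each column $j$ omits exactly $m_j^\dagger$.

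\emph{Step 1: the generated piece.} Consider the symmetric subset $\lr{M}\subseteq X$. It is spiny, and it is reduced because $X_0$ is a point, so it is a partial group. Split into two cases.

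\begin{itemize}
\item[(a)] Some element of $I$ does not occur in $M$ at all. Then it must be the common omitted element $z = m_i$ for every $i$, and $z = z^\dagger$. Here $\lr{M}$ has order $n$ and dimension $n-1$, so by \cref{hm thm} it is $BH$ for a group $H$ with $H\setminus\{0\} = I\setminus\{z\}$. Suppose some nontrivial multiplication in $X$ involved $z$, say $z a = b$ after applying the $\Sigma_3$-action, with $a,b\in H$. The binary partial group axioms give $b a^\dagger = z$. But $b a^\dagger$ is already defined in $H$ with value in $H$, and spininess forces products to be unique, which is a contradiction. Hence $X \cong BH\vee BC_2$, contradicting indecomposability. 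So case (a) cannot occur.
\item[(b)] Every element of $I$ occurs in $M$. This is the case we must handle.
\end{itemize}

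\emph{Step 2: completing to a group.} In case (b), form the $(n+1)\times(n+1)$ matrix $\widehat M$ by appending a row and column with entries $\widehat a_{in} = m_i$, $\widehat a_{ni} = m_i^\dagger$ and $\widehat a_{nn} = 0$. Every row and column of $\widehat M$ is then a permutation of $X_1$, so $\widehat M$ is a Latin square. The goal is to show that $\widehat M$ is suitable (\cref{def suitable}); the only nontrivial condition is \eqref{suitable BS cond}. Granting this, $\lr{\widehat M}$ is spiny by \cref{suitable to spiny}, and it has order $n+1$ and dimension $n$. By \cref{hm thm} it is $BG$ for a group $G$. Since every nondegenerate $(n-1)$-simplex of $BG$ is a face of $\widehat M$, and $d_n\widehat M = M$ generates all of these under $\Sigma_{n+1}$, we get
\[
\sk_{n-1}BG = \lr{M} \subseteq X.
\]

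\emph{Step 3: no extra simplices.} Because $n-1\ge 2$, the skeleton $\sk_{n-1}BG$ contains every 2-simplex of $BG$. So the underlying BPG of $X$ has every product defined, agreeing with $G$. By \cref{B construction}, $X\subseteq BP = BG$. Since $\dim X = n-1$ and all simplices of $BG$ of dimension at most $n-1$ already lie in $X$, we conclude $X = \sk_{n-1}BG$. Uniqueness of $G$ follows because $G$ is recovered from the underlying BPG of $X$.

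\emph{Main obstacle.} The hard step is verifying \eqref{suitable BS cond} for $\widehat M$ when the triples involved use the new index $n$. A typical instance is showing that $a_{ij} = m_p$ and $m_i = a_{pr}$ together imply $a_{jn} = a_{qr}$, that is, that the omitted entries are consistent with $X$'s multiplication. My first attempt would be to use the faces of $M$ with $n-1\ge 4$ vertices. Whenever two of the three indices lie among $0,\dots,n-1$, one can find another index $k$ such that the relevant multiplications are witnessed inside a 3-simplex $\alpha^*M$. Uniqueness of row entries then pins down the omitted element: if $a_{ij}a_{jk}$ lands in row $i$, the only element that can avoid row $i$ is $m_i$. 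The hypothesis $n+1\ge 6$ should be exactly what guarantees enough room to choose such auxiliary indices.
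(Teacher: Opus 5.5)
Steps 1 and 3 are fine and match the paper. Case (a) is exactly the paper's use of indecomposability, and the final sandwich $\sk_{n-1}BG\subseteq \lr{M}\subseteq X\subseteq BG$ is how the paper concludes. The gap is Step 2, which carries the whole content of the theorem and is not proved.

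\emph{Row $n$ is not justified.} Row $n$ of $\widehat M$ is $(m_0^\dagger,\dots,m_{n-1}^\dagger,0)$. Calling $\widehat M$ a Latin square therefore requires the omitted elements $m_i$ to be pairwise distinct. You also need this for $\widehat M$ to be nondegenerate, so that $\lr{\widehat M}$ has dimension $n$. Distinctness does not follow from case (b): a priori an element can be missing from two rows of $M$. For instance, $m_0$ is also missing from any row $i$ for which $m_0 f_{0i}$ happens to be undefined.

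\emph{Suitability is only a heuristic.} You leave condition \eqref{suitable BS cond} for $\widehat M$ as the ``main obstacle''. Verifying it amounts to showing that the partial multiplication on $X_1$ is total, associative, and compatible with the $m_i$. That is, it amounts to showing the multiplication is a group, which is the whole theorem.

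The paper fills this by proving directly that the BPG of $\lr{M}$ is a group. Write $M=(f_{ij})$ and $a=m_0$.
\begin{enumerate}
\item For any $f_{ij}=a$, the faces $(k,i,j)$ give $a f_{ki}=f_{kj}$ for all $k$. So $ab$ is undefined for at most one $b$.
\item Since $f_{0j}f_{i0}=f_{ij}$, if $x$ is missing from row $i$ then $x f_{0i}$ is either undefined or equal to $a$. Each alternative occurs for at most one $i$, so every $x$ is missing from at most two rows.
\item Hence at most four rows miss $\{x,y\}$. As $n\ge 5$, some row $i$ contains $x=f_{ij}$ and $y=f_{ik}$, and the face $(i,j,k)$ shows $yx^\dagger$ is defined. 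So the multiplication is total. This pigeonhole step is where $n+1\ge 6$ is actually used.
\item With totality, the ``undefined'' alternative in (2) disappears. Each element is then missing from at most one row, so $i\mapsto m_i$ is a bijection. This is exactly the distinctness your row $n$ needs.
\item Associativity $(xy)z=x(yz)$ comes from a $3$-simplex face $\alpha^*M$ with superdiagonal $(z,y,x)$. Choose one of the at least $n-2\ge 3$ columns $j$ containing both $y$ and $yz$ for which $x$ appears in row $j$.
\end{enumerate}
Once the BPG is a group $G$, the bijection $i\mapsto m_i$ shows directly that every nondegenerate $(n-1)$-simplex of $BG$ lies in the $\Sigma_n$-orbit of $M$. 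So the auxiliary matrix $\widehat M$ is not needed at all.
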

\begin{proof}
Consider a nondegenerate $(n{-}1)$-simplex $M = (f_{ij})$ in $X_{n-1}$, and let $a$ be the unique element of $X_1$ which does not appear in the first row of $M$.

We first observe that $a$ appears in $M$.
If $a\neq a^\dagger$, then since $a^\dagger$ is in the first row, $a$ is in the first column.
Suppose $a=a^\dagger$ does not appear among the elements of $M$.
Then the order of $\lr{M}$ is $n$ and its dimension is $n-1$, so $\lr{M}$ is a group.
Since $X$ is indecomposable we do not have $X = \lr{M} \vee \lr{a}$, which means there exists a nonidentity $b \in \lr{M}$ such that $ab$ is defined in the underlying BPG of $X$.
Since $ab \neq a$, we also have $ab \in \lr{M}$.
Then $a = (ab)b^\dagger$ is in the group $\lr{M}$, contrary to assumption.

Let $Y$ be the indecomposable partial group $\lr{M}$ and let $P$ be its underlying BPG.
As we saw in the previous paragraph, $X$ and $Y$ have the same set of edges.
Our immediate aim is to prove that $P$ is a group. 

Many products in $P$ are defined, for instance we have the multiplications $f_{0j}f_{i0} = f_{ij}$.
We show also that $a$ multiplies with many other elements: for each $i,j$ such that $f_{ij} = a$, let $\alpha_k \colon [2] \to [n-1]$ be given by $(0,1,2) \mapsto (k,i,j)$, and consider the set of matrices  $A_{ij} = \{ \alpha_k^*M \mid 0\leq k \leq n-1 \}$.
This is the set of 2-simplices
\[
\alpha_k^*M = 
\begin{bmatrix}
0 & f_{ki} & f_{kj} \\
f_{ik} & 0 & a \\
f_{jk} & a^\dagger & 0
\end{bmatrix} \qquad 0\leq k \leq n-1,
\]
so $af_{ki}$ is defined for all $0\leq k \leq n-1$.
But the $f_{ki}$ are distinct, so $ab$ is defined for at least $n$ elements of $Y_1$.

We next show that an element $x\in Y_1$ can be missing from at most two rows of $M$.
If $x$ is not in row $i$, then there does not exist a $j$ with $x = f_{ij} = f_{0j}f_{i0}$.
This implies either that $xf_{0i}$ is undefined, or that it is defined and equal to $a$.
There is at most one $i$ such that the latter holds.
If $xf_{0i}$ is undefined then $x$ is not in the first row or $f_{0i}$ is not in the first column, i.e.\ $x=a$ or $f_{0i} = a^\dagger$. 
These cases are mutually exclusive since $aa^\dagger$ is defined.
There is of course at most one $i$ such that $f_{0i} = a^\dagger$, and by the previous paragraph there is at most one $i$ such that $af_{0i}$ is undefined.
Thus we have indeed shown there are at most two rows not containing $x$.

Given $x,y \in Y_1$, there are at most four rows which do not contain the set $\{x,y\}$. 
Since $n\geq 5$, there is a row containing $x$ and $y$, hence 2-simplices
\[
\begin{bmatrix}
0 & x & y \\
x^\dagger & 0 & z \\
y^\dagger & z^\dagger & 0 
\end{bmatrix}
\rightsquigarrow 
\begin{bmatrix}
0 & x^\dagger & z \\
x & 0 & y \\
z^\dagger & y^\dagger & 0
\end{bmatrix}.
\]
In particular, the operation $(x,y) \mapsto yx^\dagger$ is totally defined, so multiplication is totally defined in $P$.

For a totally defined multiplication, the argument in the paragraph above gives that each element $x$ is missing from at most one row.
Thus the assignment $[n-1] \to Y_1 \setminus \{0\}$ sending $i$ to the element which row $i$ is missing is injective, hence bijective, and each $x\neq 0$ appears in exactly $n-1$ rows.

We next prove associativity $(xy)z = x(yz)$ in $P$. 
At least $n-2 \geq 3$ columns of $M$ contain both $y$ and $yz$ --- pick such a column $j$ such that $x$ is in row $j$. 
We write $y = f_{ij}$, $yz = f_{kj}$, and $x = f_{j\ell}$ for some $i,k,\ell \in [n-1]$.
Define $\alpha \colon [3] \to [n-1]$ sending $0,1,2,3$ to $k,i,j,\ell$.
The 3-simplex $\alpha^*M$ has superdiagonal $(z,y,x)$, and we conclude $(xy)z = x(yz)$.

We have now established that $P$ is a group, which we subsequently write as $G$. 
It remains to show $\sk_{n-1} BG \subseteq Y$, namely that every nondegenerate $n-1$ simplex of $BG$ is contained in $Y$.
Via the Bousfield-Segal map $\bous_{n-1}$, these are in bijection with ordered lists of distinct nonidentity elements $(g_1, \dots, g_{n-1})$.
There is a unique row $j$ of $M$ whose nonidentity entries form the set $\{g_1, \dots, g_{n-1}\}$ and $M' = (0j)^*M$ has the tail of its first row exactly that set of entries.
Apply an automorphism of $[n-1]$ which fixes zero to $M'$ to put the $g_i$ in the first row into the correct order.
We have then established that each nondegenerate $n-1$ simplex of $BG$ is in the $\Sigma_n$-orbit of $M$, hence in $\lr{M} = Y$.

To conclude, we have inclusions $\sk_{n-1} BG \subseteq Y \subseteq X \subseteq BG$. 
Taking $(n{-}1)$-skeleta we see $\sk_{n-1} BG = \sk_{n-1} X$, which is equal to $X$.
\end{proof}

\begin{proposition}\label{prop dim one less size decomp}
Suppose $X$ is a decomposable partial group of order $n+1$ and $\dim(X) = n-1$. 
Then $X \cong BG \vee BC_2$ for a (unique) group $G$.
\end{proposition}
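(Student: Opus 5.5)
Write $X = Y \vee Z$ with $Y, Z$ nontrivial partial groups. The plan is to compare orders and dimensions in the two wedge summands and show that dimension is forced to be as large as possible in one of them.

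First, the counting. Orders add with the identity shared, $|X_1| = |Y_1| + |Z_1| - 1$. Write $|Y_1| = p+1$ and $|Z_1| = q+1$ with $p, q \geq 1$, so that $p + q = n$. By \cref{hm thm}, $\dim Y \leq p$ and $\dim Z \leq q$. Since $\dim(Y \vee Z) = \max(\dim Y, \dim Z)$, we get $n - 1 = \max(\dim Y, \dim Z) \leq \max(p, q)$. Without loss of generality $\dim Y = n-1$. Then $p \geq n-1$, which forces $q \leq 1$, so $q = 1$ and $p = n-1$.

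Second, identify the pieces. Since $\dim Y = p$, \cref{hm thm} makes $Y$ the nerve of a group $G$ of order $n$. The summand $Z$ has order $2$, so its involutive set is a single element, necessarily fixed. A partial group of order $2$ is either $BC_2$ or its $0$-skeleton. The latter is impossible: the unique $1$-simplex $\begin{bsmallmatrix} 0 & 1 \\ 1 & 0 \end{bsmallmatrix}$ is nondegenerate and forced, and this matches the paper's count of a single nontrivial partial group of order 2. Hence $X \cong BG \vee BC_2$.

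Uniqueness. Suppose $BG \vee BC_2 \cong BH \vee BC_2$. I would argue via the decomposition into indecomposables, which is unique because indecomposability is detected by the connectivity graph of the underlying BPG (components of that graph). A group $BG$ of order $\geq 2$ is indecomposable; when $G$ is trivial the statement is degenerate. Matching components then gives $BG \cong BH$, hence $G \cong H$.

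The main obstacle is making the uniqueness rigorous: the connected components of the connectivity graph must determine the wedge summands canonically. A simpler alternative is to avoid components entirely. $BG$ is the unique maximal subgroup of $X$ of order $n$, equivalently the symmetric subset generated by any nondegenerate $(n-1)$-simplex. It is recovered intrinsically as $\lr{M}$ for any $M \in X_{n-1}$ nondegenerate, because all such simplices lie in the $BG$ summand. This characterization is invariant under isomorphism, so it pins down $G$.
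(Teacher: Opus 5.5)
Your proof is correct and follows the paper's argument: the order count via \cref{hm thm} forces the summand of dimension $n-1$ to have exactly $n-1$ nonidentity elements, hence to be a group, and forces the other summand to have exactly one. That summand is then $BC_2$; the clean justification is that it is $1$-skeletal by \cref{hm thm} and so determined by its two-element pointed involutive set. Your ``either $BC_2$ or its $0$-skeleton'' dichotomy is muddled, since $\sk_0 BC_2$ does not have order $2$.

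Your uniqueness argument recovers $BG$ intrinsically as $\lr{M}$ for a nondegenerate $(n-1)$-simplex $M$, which addresses a point the paper's proof leaves implicit. It is fine for $n\geq 3$. For $n=2$ both summands are $BC_2$, so uniqueness is trivial there.
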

\begin{proof}
Write $X = Y \vee Z$ with $Y,Z$ nontrivial partial groups and $\dim(Y) = n-1$.
By the first part of \cref{hm thm}, $Y$ cannot have fewer than $n-1$ nonidentity elements, so it must have exactly $n-1$.
By the second part, $Y$ is a group of order $n$.
As $Z$ has a single nonidentity element, $Z \cong BC_2$.
\end{proof}

The following concerns the invariant from \cref{sec degree}.

\begin{corollary}
Let $n\geq 1$, and suppose $X$ is a partial group of order $n+1$ and dimension $n-1$.
If $X$ is indecomposable then $\deg(X) = n$ and if $X$ is decomposable then $\deg(X) = 2$.
\end{corollary}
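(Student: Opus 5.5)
The plan is to split into the two cases of the statement and, in each, reduce to an explicit description of $X$ coming from the results just proved.

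\emph{Indecomposable case.} For $n+1\geq 6$, \cref{dim one less size} gives $X = \sk_{n-1} BG$ for a group $G$ of order $n+1$. By \cref{hm thm}, $\dim BG = n$, so $n-1$ is exactly one less than the dimension of $BG$. I would then invoke \cref{lem skeleton dimension}, the skeleton/degree lemma already used in \cref{rmk degree calc}, to conclude $\deg(\sk_{n-1}BG) = n$. As a sanity check on the shape of that lemma, the case $m=n-1$ should go as follows. Degree at most $n$ is automatic: in $\sk_{n-1}BG$ every nondegenerate starry word has length at most $n-1$ (otherwise it would be a nondegenerate simplex of dimension $\ge n$), and \cref{prop degree nondeg} only tests nondegenerate words of length $\geq n+1$. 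Degree exactly $n$ is witnessed by the generating top simplex of $BG$: its $\bous_n$-image is a nondegenerate word of length $n$, all of whose faces lie in $\sk_{n-1}BG$, but which itself is not in $\sk_{n-1}BG$.

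For orders $n+1\leq 5$ there is no theorem to cite, so I would read the cases off \cref{table size at most 4}. The entries of the required shape are $\sk_1 BC_3$, $\sk_2 BV_4$, $\sk_2 BC_4$ and $\sk_3 BC_5$, with degrees $2,3,3,4$, which equal $n$ in each case. For $n=1$ the statement is vacuous, since a partial group of order $2$ has dimension $1$, not $0$.

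\emph{Decomposable case.} By \cref{prop dim one less size decomp}, $X\cong BG\vee BC_2$ with $|G|=n\geq 2$; write $t$ for the nonidentity edge of $BC_2$. Degree $1$ would make $X$ the nerve of a group, because $X$ is reduced. This is impossible, since the product $tg$ is undefined for $g\neq e$ in $G$. Hence $\deg X\ge 2$.

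For the upper bound I would use \cref{prop degree nondeg} with $k=2$. Let $w\in\bswords X_m$ be a nondegenerate word with $m\geq 3$ and $d_{m-2}w,d_{m-1}w,d_mw\in\bous_{m-1}(X_{m-1})$. Every nondegenerate simplex of a wedge lies in one summand, and the only nondegenerate simplices of the $BC_2$ summand have dimension at most $1$. So if all letters of $w$ lie in $G$, then $w\in \bous_m(BG_m)$, because $BG$ has degree $1$. Otherwise $t$ occurs exactly once, by nondegeneracy. Then at least two of the last three positions hold letters of $G$. Deleting such a letter yields a face of length $m-1\geq 2$ that still contains both $t$ and an element of $G$. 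That face is not in the image of $\bous_{m-1}$, which contradicts the hypothesis. Hence $\deg X = 2$.

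The main obstacle is the indecomposable case, specifically making sure \cref{lem skeleton dimension} gives degree exactly $\dim$ of the skeleton plus one. I would handle it directly as in the sanity check above if the cited lemma's statement differs.
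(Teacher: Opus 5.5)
Your proposal is correct. In the indecomposable case it is the paper's own argument. \cref{dim one less size} gives $X=\sk_{n-1}BG$ with $\dim BG=n$, and \cref{lem skeleton dimension} then gives $\deg X=n$. Orders $\le 5$ are covered by the list in \cref{sec tiny order}, as the paper notes just before \cref{dim one less size}.

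The decomposable case is where you go beyond the paper. The paper's one-line proof cites only \cref{prop dim one less size decomp} and \cref{lem skeleton dimension}. That lemma is about skeleta, though, and does not by itself give $\deg(BG\vee BC_2)=2$. Your direct check via \cref{prop degree nondeg} supplies that step and is self-contained. It rests on two facts. First, every simplex of a wedge lies in a single summand. Second, a nondegenerate word of length $m\ge 3$ containing $t$ therefore has some face among $d_{m-2},d_{m-1},d_m$ that mixes $t$ with a letter of $G$, and such a face cannot lie in the image of $\bous_{m-1}$. The lower bound, from the undefined product $tg$, is also fine.

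There is one slip in your sanity check. It is not true that every nondegenerate starry word of $\sk_{n-1}BG$ has length at most $n-1$: starry words need not lie in the image of $\bous$. Your own witness, the $\bous_n$-image of the top simplex of $BG$, is a nondegenerate starry word of length $n$. The bound $\deg\le n$ is vacuous for a different reason. Either there are only $n$ nonidentity edges, so no nondegenerate word of length $\ge n+1$ exists. Or, for such a word $w$ of length $m\ge n+1$, the faces $d_iw$ are nondegenerate of length $m-1\ge n$, so they cannot lie in $\bous_{m-1}$ of an $(n-1)$-skeletal set. Since your main line cites \cref{lem skeleton dimension} anyway, this does not affect the proof.
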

\begin{proof}
This follows from \cref{dim one less size}, \cref{prop dim one less size decomp}, and \cref{lem skeleton dimension} below.
\end{proof}

The statement of the following lemma is a variation of \cite[Example 3.19]{HackneyLynd:HSSPG}.

\begin{lemma}\label{lem skeleton dimension}
Suppose $X$ is a spiny symmetric set with $\dim(X) \geq n$. 
Then $\deg(\sk_{n-1}X) = n$.
\end{lemma}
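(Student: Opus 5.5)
We need to show two things: $\sk_{n-1}X$ has degree at most $n$, and it does not have degree at most $n-1$. Write $Z = \sk_{n-1}X$. Throughout we use the criterion of \cref{prop degree nondeg}, so only nondegenerate starry words need to be considered.

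\emph{Upper bound.} Let $m \geq n+1$ and let $w \in \bswords Z_m$ be nondegenerate with $d_{m-n}w, \dots, d_m w \in \bous_{m-1}(Z_{m-1})$. Since $Z$ is $(n{-}1)$-skeletal and spiny, a nondegenerate simplex of $Z$ has dimension at most $n-1$. By the characterization of nondegeneracy in \S\ref{skeleta and dimension}, a face $d_i w$ of a nondegenerate starry word is again nondegenerate (distinct, nonidentity entries). If $d_iw = \bous_{m-1}(y)$, then $y$ is a nondegenerate $(m{-}1)$-simplex. This forces $m-1 \leq n-1$, contradicting $m \geq n+1$. So the hypothesis is vacuous and $\deg Z \leq n$.

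\emph{Lower bound.} It suffices to produce a nondegenerate $w \in \bswords Z_n$ with $d_1 w, \dots, d_n w \in \bous_{n-1}(Z_{n-1})$ but $w \notin \bous_n(Z_n)$. Here we take $k = n-1$, so the faces to test are $d_{n-k},\dots,d_n = d_1,\dots,d_n$. Since $\dim X \geq n$, pick a nondegenerate $n$-simplex $x \in X_n$ and set $w = \bous_n(x) \in \bswords X_n = \bswords Z_n$, noting that $Z_1 = X_1$ and $Z_0 = X_0$. For $1 \leq i \leq n$, $d_i w = \bous_{n-1}(d_i x)$, and $d_ix \in X_{n-1} = Z_{n-1}$. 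On the other hand, $w$ is nondegenerate (its entries are distinct and nonidentity), so any preimage under $\bous_n$ would be a nondegenerate $n$-simplex of $Z$. None exists since $Z$ is $(n{-}1)$-skeletal, and hence $w \notin \bous_n(Z_n)$. Thus $Z$ does not have degree at most $n-1$ (for $n-1 \geq 1$).

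The two remaining checks are routine. First, $\bous$ commutes with the face maps $d_i$ for $i \geq 1$, which holds because these faces fix $0$. Second, $(\sk_{n-1}X)_{n-1} = X_{n-1}$, which is immediate from the definition of skeleton. The edge case $n=1$ needs separate handling: there $\sk_0 X$ is trivial, degree $1$ is the minimum possible, and the statement is immediate. The main point of care is the convention that degree is defined only for $k \geq 1$, which is why the lower bound argument is stated for $n-1 \geq 1$.
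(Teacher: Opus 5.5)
Your proof is correct. The lower bound is the same argument as the paper's. A nondegenerate $x \in X_n$ has $d_1x,\dots,d_nx \in (\sk_{n-1}X)_{n-1} = X_{n-1}$, while $x \notin (\sk_{n-1}X)_n$, so $\sk_{n-1}X$ fails to have degree at most $n-1$.

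The difference is in the upper bound. The paper simply cites Theorem 9.6 of \cite{HackneyLynd:HSSPG}, namely $\deg \leq \dim + 1$. You instead derive it directly from \cref{prop degree nondeg} and the characterization of nondegenerate simplices through $\bous$. In an $(n{-}1)$-skeletal spiny symmetric set, every face $d_iw$ of a nondegenerate starry word of length $m \geq n+1$ is itself a nondegenerate word of length $m-1 \geq n$. Such a word cannot lie in the image of $\bous_{m-1}$, so the degree-$n$ condition holds vacuously. Your argument works verbatim for any $d$-skeletal spiny symmetric set, so it reproves the cited bound $\deg \leq \dim + 1$ in the spiny setting and makes the lemma self-contained within the paper. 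The paper's version is shorter because it outsources that bound.

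Your separate treatment of $n=1$ is also sensible. ``Degree at most $0$'' is not defined, so the paper's lower-bound step only makes sense for $n \geq 2$. One minor correction: for a non-reduced $X$, $\sk_0 X$ is a disjoint union of points rather than trivial. Your upper-bound argument already gives degree $1$ in that case anyway.
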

\begin{proof}
By Theorem 9.6 of \cite{HackneyLynd:HSSPG}, $\deg(\sk_{n-1}X) \leq \dim(\sk_{n-1} X) + 1 = n$.
Suppose $x\in X_n$ is nondegenerate and $k=n-1$.
Then $d_i x \in (\sk_{n-1} X)_{n-1}$ for $n-k = 1 \leq i \leq n$.
But $x\notin (\sk_{n-1} X)_n$, so $\sk_{n-1} X$ does not have degree at most $k = n-1$.
\end{proof}

\section{Data format and usage}\label{sec data format}
A companion to this paper is a dataset consisting of all partial groups and BPGs of order at most 10 \cite{partial_groups_data}.
These are both stored in the JSON Lines format (see \url{https://jsonlines.org/}).
A reference importer (in Python) is included alongside the dataset.
The associated Julia project \cite{partialgroups_jl} also contains facilities for importing from the dataset.

\subsection{The BPG file format}
The binary partial groups are split into two files: one for all BPGs of order up to 9, and another, much larger file containing the BPGs of order 10.
Each line represents a BPG, and consists of four fields:
\begin{description}
\item[\texttt{free}] the number of free orbits in the underlying involutive set.
\item[\texttt{fixed}] the number of nonidentity fixed points in the underlying involutive set.
\item[\texttt{bpg\char`_hash}] a string which gives a stable short identifier (derived from SHA-256).
\item[\texttt{mults}] an array of nontrivial generating multiplications for the BPG.
\end{description}

The (\texttt{free},\texttt{fixed}) pair determines the underlying involutive set (see \cref{subsec inv set}).
Each generating multiplication in \texttt{mults} takes the form \texttt{[a,b,c]}, which should be interpreted as specifying $\mathtt{a}\cdot \mathtt{b} = \mathtt{c}$.
The full BPG may be reconstructed as the union of the 1-skeleton (specified by the involutive set) together with a collection of atomic BPGs, one for each element of \texttt{mults}.
As examples:
\begin{itemize}[left=0pt]
\item \verb|[1,0,"f1_x0_200428d33786",[]]| is the representation for the free partial group on one generator.
\item \verb|[1,0,"f1_x0_b50145ab2deb",[[1,1,-1]]]| is the representation for $C_3$ (considered as a BPG). 
\item \verb|[1,2,"f1_x2_7d59313238d9",[[1,1,-1],[1,3,2]]]| is the BPG called $P_2$ in \cref{sec tiny order}.
\end{itemize}
We warn that the algorithm for computing \texttt{bpg\char`_hash} is not isomorphism-invariant.

\subsection{The partial group file format}
The partial groups are spread over a number of files, one for each (\texttt{free},\texttt{fixed}) pair.\footnote{With variations for the (2,5) and (4,1) cases in order 10, which are each split over two files.}
The format for these is even simpler -- it consists of a \verb|bpg_hash| string identifying the underlying BPG, followed by a list of generating matrices for the partial group.
The set of generating matrices completely determines the partial group, as a symmetric subset of $\mat(\mathbb{Z})$ (see \cref{sec matrices to SSS}).
\begin{itemize}[left=0pt]
\item \verb|["f0_x3_04f444b95639",[[[0,1,2,3],[1,0,3,2],[2,3,0,1],[3,2,1,0]]]]|  is the partial group $BV_4$ (see \cref{sec tiny order}).
\item \verb|["f1_x1_f8a813dd9579",[[[0,1,-1],[-1,0,1],[1,-1,0]],[[0,2],[2,0]]]]| is the decomposable partial group $BC_3 \vee BC_2$.
\end{itemize}
The partial group still needs to be regenerated from the list of generating matrices $M$: the nondegenerate simplices are all of the form $\alpha^*M$ where $\alpha$ is injective.
We note that \verb|bpg_hash| is not necessary for reconstructing the partial group, but is useful for finding which partial groups have the same underlying BPG, without doing the full reconstruction.

\bibliographystyle{alpha}
\bibliography{enum_pg}
\end{document}